\def\rr{{\mathbb R}}
\def\rn{{{\rr}^n}}
\def\zz{{\mathbb Z}}
\def\nn{{\mathbb N}}
\def\hh{{\mathbb H}}
\def\fz{\infty}
\def\az{\alpha}
\def\dist{{\mathop\mathrm{\,dist\,}}}
\def\loc{{\mathop\mathrm{\,loc\,}}}
\def\lz{\lambda}
\def\dz{\delta}
\def\ez{\epsilon}
\def\bz{\beta}
\def\gz{{\gamma}}
\def\boz{{\Omega}}
\def\sz{\sigma}
\def\wz{\widetilde}
\def\ls{\lesssim}
\def\gs{\gtrsim}
\def\bint{{\ifinner\rlap{\bf\kern.35em--}
\int\else\rlap{\bf\kern.45em--}\int\fi}\ignorespaces}
\def\bbint{{\ifinner\rlap{\bf\kern.35em--}
\hspace{0.078cm}\int\else\rlap{\bf\kern.45em--}\int\fi}\ignorespaces}
\def\esup{\mathop\mathrm{\,esssup\,}}
\def\einf{\mathop\mathrm{\,essinf\,}}
\def\diam{{\mathop\mathrm{\,diam\,}}}
\def\r{\right}
\def\lf{\left}
\newtheorem{thm}{Theorem}[section]
\newtheorem{lem}[thm]{Lemma}
\newtheorem{prop}[thm]{Proposition}
\newtheorem{rem}[thm]{Remark}
\newtheorem{cor}[thm]{Corollary}
\newtheorem{defn}[thm]{Definition}
\numberwithin{equation}{section}
\subjclass[2000]{42B35; 46E30; 47B38; 30H25}
\keywords{Quasiconformal mappings; Compositions; Q-spaces}
\begin{document}
\allowdisplaybreaks
\arraycolsep=1pt

\title[A Quasiconformal Composition Problem for the $Q$-spaces]{A Quasiconformal Composition Problem for the $Q$-spaces}
\author{Pekka Koskela, Jie Xiao, Yi Ru-Ya Zhang and Yuan Zhou}

\address{Pekka Koskela:\ Department of Mathematics and Statistics,
         P.O. Box 35 (MaD), FI-40014, University of Jyv\"askyl\"a, Finland}
         \email{pkoskela@maths.jyu.fi}

\address{Jie Xiao:\ Department of Mathematics and Statistics,
         Memorial University,
         NL A1C 5S7, Canada}
         \email{jxiao@mun.ca}

\address{Yi Ru-Ya Zhang:\  Department of Mathematics, Beijing University of Aeronautics and Astronautics, Beijing 100191, P.R. China
and Department of Mathematics and Statistics,
         P.O. Box 35 (MaD), FI-40014, University of Jyv\"askyl\"a, Finland}
          \email{ddfx023zy@gmail.com}

          \address{Yuan Zhou:\  Department of Mathematics, Beijing University of Aeronautics and Astronautics, Beijing 100191, P.R. China}
                    \email{yuanzhou@buaa.edu.cn}

\thanks{Pekka Koskela was supported by the Academy of Finland grant 120972; Jie Xiao was supported by NSERC of Canada (\# 202979463102000) and URP of Memorial University (\# 208227463102000); Yi Zhang and Yuan Zhou were supported by
the New Teachers' Fund for Doctor Stations (\# 20121102120031) and Program for New Century Excellent Talents in University (\# NCET-11-0782)  of Ministry of
Education of China, and National Natural Science Foundation of China (\# 11201015) .}

\begin{abstract}
Given a quasiconformal mapping $f:\rn\to\rn$ with $n\ge2$, we show that
(un-)boundedness of the composition operator ${\bf C}_f$
on the spaces $Q_{\az}(\rn)$
depends  on the index
 $\az$ and
the degeneracy set of the Jacobian $J_f$.
We establish  sharp results in terms of the index $\az$ and the local/global self-similar Minkowski dimension of the degeneracy set of  $J_f$.
This gives a solution to \cite[Problem 8.4]{ejpx} and also reveals a completely new phenomenon,  which is totally different from
the known results for Sobolev, BMO, Triebel-Lizorkin and Besov spaces. Consequently,
Tukia-V\"ais\"al\"a's quasiconformal extension  $f:\rn\to\rn$ of an arbitrary quasisymmetric mapping $g:\rr^{n-p}\to \rr^{n-p}$
is shown to preserve  $Q_{\az} (\rn)$ for any $(\az,p)\in (0,1)\times[2,n)\cup(0,1/2)\times\{1\}$. Moreover, $Q_{\az}(\rn)$ is shown to be invariant under inversions for all $0<\az<1$.
\end{abstract}
 \maketitle

\tableofcontents

 \section{Introduction}\label{s1}

Quasiconformal mappings can be characterized via
invariant function spaces. For example, a homeomorphism $f:\rn\to\rn,$ $n\ge 2,$
is quasiconformal if and only if the composition operator ${\bf C}_f$ (given by ${\bf C}_f(u)=u\circ f$) is
bounded on
the homogeneous Sobolev space $\dot W^{1,\,n}(\rn)$; see for example \cite{k11}.
The composition property is easiest seen from the usual
analytic definition, according to which a homeomorphism $f:\rn\to\rn,$ $n\ge 2,$
is quasiconformal if $f\in W^{1,\,1}_\loc(\rn;\rn)$ and there is a constant $K\ge 1$ so that
$$|Df(x)|^n \le K J_f(x),\quad a.\,e.\ x\in\rn.$$
Indeed, modulo technicalities, one simply uses the chain rule and a change
of variables. It is far less obvious that also the invariance of the
Triebel-Lizorkin spaces $\dot F^s_{n/s,\,q}(\rn)$ with $0<s<1$ and
$n/(n+s)<q<\infty$
characterizes quasiconformality, see \cite{v89,bp03,kyz,kkss}. The difficulty
here is that one has to deal with ``fractional derivatives'' and thus the
inequality from the analytic definition is not immediately helpful.
For the off-diagonal Besov spaces $\dot B^s_{n/s,\,q}(\rn)$ with $q\ne n/s$,
the situation is different:
 each homeomorphism $f$ for which  ${\bf C}_f$ is bounded on
$\dot B^s_{n/s,\,q}(\rn)$ has to be quasiconformal and even
bi-Lipschitz; these spaces are clearly bi-Lipschitz invariant, see
\cite{kkss}. Recall here
that $f$ is bi-Lipschitz if there exists a constant $L\ge1$ such that
$$\frac1 L|x-y|\le |f(x)-f(y)|\le L|x-y|,\quad \forall x,\,y\in\rn.$$
Furthermore, the John-Nirenberg space $BMO(\rn)$ is invariant under
quasiconformal mappings and
each sufficiently regular homeomorphism $f$ for which ${\bf C}_f$ is a bounded
operator on $BMO(\rn)$  is necessarily quasiconformal; see \cite{r74,a}.

In their 2000 paper \cite{ejpx}, Essen, Jasson, Peng and Xiao introduced the so-called Q-spaces $Q_\az(\rn),$ $0<\alpha <1,$ that satisfy
$$
\dot W^{1,\,n}(\rn)\subset\dot F^{\az}_{n/\az,\,n/\az}(\rn)\subseteq Q_\az(\rn)\subseteq BMO(\rn).
$$
Each $Q_\az(\rn)$ consists of all $u\in L^2_\loc(\rn)$ with
\begin{eqnarray*}
\|u \|_{Q_\az(\rn)}&&=\sup_{x_0\in\rn,\,r>0} \lf(r^{2\az-n}\int_{B(x_0,\,r)}\int_{B(x_0,\,r)}\frac{|u(x)-u(y)|^{2}}{|x-y|^{n+2\az}}\,dx\,dy\r)^{1/2}<\infty.
\end{eqnarray*}
The above definition actually makes perfect sense for all
$-\infty<\alpha<\infty,$ but the case $\alpha\ge 1$ (when $n\ge 2$) reduces to constant
functions and the case $\alpha<0$ to $BMO(\rn)$; see \cite{ejpx}.
These spaces have received considerable interest. In \cite{ejpx}, five open
problems related to the spaces $Q_\az(\rn)$ were posed. All but the following
one of them have by now been solved.

\bigskip

\noindent {\bf A quasiconformal composition problem for the $Q$-spaces} (\cite[Problem 8.4]{ejpx}):\ \ {\it Let $f$ be a quasiconformal mapping.
 Prove or disprove the boundedness of the composition operator ${\bf C}_f$ on $Q_\alpha(\rn)$ with $\alpha\in (0,1)$.}

\bigskip

By the above string of inclusions of function spaces, all of which except
for the $Q$-spaces are known to be quasiconformally invariant, suggests that
the answer should be in the positive.

We show that, surprisingly, the answer to the above question
depends on the quasiconformal
mapping in question through the shrinking properties of the mapping.
For example, the quasiconformal mapping $f(x)=x|x|$ induces a bounded
composition operator for all $0<\alpha<1,$ but if the Jacobian of a
quasiconformal mapping decays to zero when we approach a sufficiently large
set, then the invariance may fail. Thus, the case of
$Q$-spaces is be very different from the other function spaces
that we discussed above.

In order to state our results,
we need to introduce some terminology whose analogues have appeared in
estimating the upper box-counting dimension of the singular set of a
suitable weak solution of the Navier-Stokes system \cite{RoSa}.

\begin{defn}\label{d1.1} \rm For a set $E\subseteq\rn$ and every $r>0$, denote by $N_{\rm cov}(r, E)$ the minimal number of cubes with edge length $r$ required to cover $E$.

\item\rm(i) The local self-similar Minkowski dimension of $E$ is defined as
\begin{equation}\label{e1.2}\overline\dim_L\, E=  \liminf _{N\to\fz}  \limsup_{r\to0}
\sup_{B\subset\rn\atop Nr\le r_B\le1}\frac{\log N_{\rm cov}(r,\,E\cap B)}{ \log(r_B  /r )},
\end{equation}
where  the supremum is taken over all balls $B=B(x_B,\,r_B)\subset\rn$ with  $ r_B\in [Nr,\,1]$.

\item\rm(ii) The global self-similar Minkowski dimension of $E$ is defined as
\begin{equation}\label{e1.3}\overline\dim_{LG}\,E= \liminf _{N\to\fz}  \sup_{r>0}
\sup_{B\subset\rn\atop r_B\ge Nr}\frac{\log N_{\rm cov}(r,\,E\cap B)}{ \log(r_B/r )},
\end{equation}
where the first supremum is taken over all
$r\in(0,\,\fz)$ and the second  is
 over all balls $B=B(x_B,\,r_B)\subset\rn$ with $r_B\in [Nr,\,\fz)$.
\end{defn}

We also need the concept of the local Muckenhoupt class.

\begin{defn}\label{d1.1d} \rm
For a closed set $E\subseteq\rn$ and a nonnegative function $w:\rn\to\rr$,
we say that $w$ belongs to the local Muckenhoupt class $A_1(\rn;\,E)$
provided there exists a positive constant $C$ such that
\begin{equation}\label{e1.1}
 \bint_{B}w(z)\,dz\le C\,\einf_{x\in B} w(x)
 \end{equation}
holds for every ball $B=B(x_B,\,r_B)\subset\rn$ with $2r_B<d(x_B,\,E)$. Naturally, $A_1(\rn;\,\emptyset)$ stands for the Muckenhoupt class $A_1(\rn)$. Accordingly, $E$ is called the degeneracy set of $w$ when  $w\in A_1(\rn;\,E)$.
\end{defn}

The main result of this paper is the following theorem.

\begin{thm}\label{t1.2}
Given $n\ge 2$, let $f:\rn\to\rn$ be a quasiconformal mapping with $J_f\in A_1(\rn;\, E)$ for some closed set $E\subseteq\rn$.
If $E$ is a bounded set with $\overline\dim_L\, E\in[0,\,n)$ or $E$ is an unbounded set with $\overline\dim_{LG}\,E\in[0,\,n)$,
 then   ${\bf C}_f$ is bounded on $Q_{\az}(\rn)$ for all
\begin{eqnarray}\label{e1.xxx}
0<\az<\lf\{\begin{array}{ll}
  \min \{ 1 ,\frac {n-\overline\dim_L\,E} 2\} ,& \ {\rm if} \ E\ {\rm is\ bounded};\\
  \min \{ 1 ,\frac {n-\overline\dim_{LG}\,E}2 \} ,\quad & \ {\rm if} \ E\ {\rm is\ unbounded}.
\end{array}\r.
\end{eqnarray}

 In particular, if  $E$ is a bounded set with  $\overline\dim_L\, E\in[0,\,n-2]$ or
 $E$ is a unbounded set with   $\overline\dim_{LG}\, E\in[0,\,n-2]$, then  ${\bf C}_f$ is bounded on $Q_{\az}(\rn)$ for all $\az\in(0, 1)$.

 \end{thm}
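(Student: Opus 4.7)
The plan is to verify, uniformly in balls $B_0=B(x_0,r_0)\subset\rn$, the bound
\[
I(B_0) := r_0^{2\az-n}\int_{B_0}\int_{B_0}\frac{|u(f(x)) - u(f(y))|^2}{|x-y|^{n+2\az}}\,dx\,dy \ls \|u\|_{Q_\az(\rn)}^2.
\]
By quasisymmetry, $f(B_0)$ sits inside a ball $\tilde B_0$ of radius $\tilde r_0\approx(\bint_{B_0}J_f)^{1/n} r_0$. The bounded and unbounded cases of $E$ are treated in parallel; the choice between $\overline\dim_L E$ (defined with $r_B\in[Nr,1]$) and $\overline\dim_{LG} E$ (with $r_B\ge Nr$ arbitrary) enters only at the final covering step. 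Translation/dilation invariance of the $Q_\az$-norm allows normalization $r_0=1$ in the bounded case; the unbounded argument is scale-free.

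\textbf{Whitney decomposition and bi-Lipschitz pieces.} Next, introduce a Whitney decomposition of $B_0\setminus E$ by dyadic cubes $\{Q_j\}$ with $\ell(Q_j)\approx\min\{d(Q_j, E), r_0\}$ and with the dilates $\{CQ_j\}$ having bounded overlap and staying away from $E$; since the dimension hypotheses force $|E|=0$, this covers $B_0$ up to a null set. On each $CQ_j$, Definition \ref{d1.1d} pins $J_f\approx c_j$ to a single value, and combining the quasiconformal inequalities $|Df|^n\le KJ_f$ and $J_f\le K\ell(Df)^n$ yields $c_j^{1/n}/C\le\ell(Df)\le|Df|\le Cc_j^{1/n}$ on $CQ_j$. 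Hence $f|_{CQ_j}$ is bi-Lipschitz with constants $\approx c_j^{1/n}$, and $f(CQ_j)$ lies in a ball of radius $\tilde r_j\approx c_j^{1/n}\ell(Q_j)$. Splitting $I(B_0)$ into a near-diagonal piece $\sum_j\int_{Q_j}\int_{2Q_j}$ plus a cross-scale remainder, the change of variables $X=f(x),Y=f(y)$ on each $CQ_j$ transforms the $j$-th near-diagonal term into
\[
c_j^{(2\az-n)/n}\int_{f(Q_j)}\int_{f(2Q_j)}\frac{|u(X)-u(Y)|^2}{|X-Y|^{n+2\az}}\,dX\,dY\ls c_j^{(2\az-n)/n}\tilde r_j^{n-2\az}\|u\|_{Q_\az}^2 = \ell(Q_j)^{n-2\az}\|u\|_{Q_\az}^2,
\]
so that after normalization by $r_0^{2\az-n}$ the per-cube contribution is $(\ell(Q_j)/r_0)^{n-2\az}\|u\|_{Q_\az}^2$.

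\textbf{Minkowski-dimension balance (the main obstacle).} Stratify $B_0$ by dyadic shells $S_k=\{r_0 2^{-k-1}<d(\cdot, E)\le r_0 2^{-k}\}$; by Definition \ref{d1.1}(i) applied with $B=B_0$ and $r=r_0 2^{-k}$, the number of Whitney cubes in $S_k$ is at most $C_\epsilon\,2^{k(\overline\dim_L E+\epsilon)}$ for every $\epsilon>0$ and $k$ sufficiently large. Summing the per-cube bound gives
\[
\sum_{k\ge 0}C_\epsilon\,2^{k(\overline\dim_L E+\epsilon)}\,2^{-k(n-2\az)}\|u\|_{Q_\az}^2 = \sum_{k\ge 0}C_\epsilon\,2^{-k(n-2\az-\overline\dim_L E-\epsilon)}\|u\|_{Q_\az}^2,
\]
which converges precisely when $2\az<n-\overline\dim_L E$ (letting $\epsilon\to 0^+$) --- exactly the hypothesis of \eqref{e1.xxx}. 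The cross-scale remainder, where $x\in Q_j,\,y\in Q_{j'}$ with $|x-y|\gg\ell(Q_j)+\ell(Q_{j'})$, is handled by telescoping $u(f(x))-u(f(y))$ through a chain of cube averages of $u\circ f$, using Reimann's $BMO$-invariance under quasiconformal maps together with $Q_\az\hookrightarrow BMO$; the kernel decay $|x-y|^{-(n+2\az)}$ for $\az>0$ absorbs the resulting logarithmic chain losses. The unbounded case proceeds identically with $\overline\dim_{LG}$ replacing $\overline\dim_L$, since its definition supplies the covering bound uniformly across all scales. The ``in particular'' claim is immediate since $\overline\dim E\le n-2$ yields $2\az<2\le n-\overline\dim E$ for every $\az\in(0,1)$. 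The main obstacle will be the cross-scale telescoping estimate, where the tight balance of $BMO$ chain losses against the kernel decay is essential.
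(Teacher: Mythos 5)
Your outline follows essentially the same route as the paper: a Whitney decomposition of $\rn\setminus E$, the $A_1$ hypothesis to freeze $J_f$ on Whitney pieces, a dyadic-shell count of Whitney cubes controlled by $\overline\dim_L E$ (resp.\ $\overline\dim_{LG}E$) giving the convergence condition $2\az<n-\overline\dim E$, and a BMO chaining argument (Reimann plus $Q_\az\hookrightarrow BMO$) for the cross-scale remainder. The one technical difference is that you handle the near-diagonal piece by a direct bi-Lipschitz change of variables on each Whitney cube, whereas the paper instead routes it through a separate oscillation characterization of $Q_\az$ (Proposition \ref{p3.1}, $\Psi_{\az,q}$) combined with a previously established ``far-from-$E$'' case; these are essentially equivalent, though you should still spell out the large-ball situation for bounded $E$ (the paper's Case 3), where the dimension bound, defined only at scales $r_B\le 1$, must be supplemented by the boundedness of $E$ to control the $O(\log r_0)$ coarse shells.
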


Theorem \ref{t1.2} is essentially sharp, see Theorems \ref{t1.4} and 1.7 below.

As the first important consequence of Theorem \ref{t1.2},  we have the following result.

\begin{cor}\label{c1.3} Let $\alpha\in (0,1)$ and $ 0\ne\bz\in\rr$. If $f(z)=|z|^{\bz-1}z$, then ${\bf C}_f$ is bounded on $Q_{\az}(\rn)$. In particular, $Q_\az{(\rn)}$ is conformally invariant in the sense that $g\in Q_{\az}(\rn)$ if and only if $x\mapsto g(x|x|^{-2})$ is in $Q_{\az}(\rn)$.
\end{cor}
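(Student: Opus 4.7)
The plan is to reduce the corollary to a direct application of Theorem \ref{t1.2}, with $E=\{0\}$ chosen as the degeneracy set of $J_f$. First I would verify that $f(z)=|z|^{\bz-1}z$ is a quasiconformal self-homeomorphism of $\rn$ (setting $f(0)=0$ when $\bz>0$, and regarding $f$ as acting on $\rn\setminus\{0\}$ when $\bz<0$, which is harmless for $\bf{C}_f$ since the missing point has measure zero). A direct computation in spherical coordinates shows that $f$ stretches the radial direction by $|\bz||z|^{\bz-1}$ and each tangential direction by $|z|^{\bz-1}$, so the pointwise linear distortion of $f$ is uniformly bounded by $\max\{|\bz|,\,|\bz|^{-1}\}$ and
$$J_f(z)=|\bz|\,|z|^{n(\bz-1)},\qquad z\ne 0.$$

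Next I would check that $J_f\in A_1(\rn;\,\{0\})$. For any ball $B=B(x_B,\,r_B)$ with $2r_B<|x_B|=d(x_B,\,\{0\})$, every $z\in B$ satisfies $|z|\in[|x_B|/2,\,3|x_B|/2]$; hence $J_f$ is essentially constant on $B$ and both $\bint_B J_f(z)\,dz$ and $\einf_{z\in B} J_f(z)$ are comparable to $|x_B|^{n(\bz-1)}$ with constants depending only on $n$ and $\bz$. This establishes \eqref{e1.1} and identifies $\{0\}$ as a degeneracy set for $J_f$.

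Since $\{0\}$ is bounded and closed with $\overline\dim_L\,\{0\}=0\in[0,\,n-2]$ for every $n\ge 2$, the second assertion of Theorem \ref{t1.2} applies and yields boundedness of ${\bf C}_f$ on $Q_{\az}(\rn)$ for every $\az\in(0,\,1)$, which is the first claim. For the conformal invariance assertion I would specialize to $\bz=-1$, for which $f(x)=x|x|^{-2}$ is the standard inversion: the previous step shows ${\bf C}_f$ is bounded on $Q_{\az}(\rn)$, and since $f\circ f=\mathrm{id}$ on $\rn\setminus\{0\}$, applying this boundedness to both $g$ and $g\circ f$ immediately yields the two-sided equivalence $g\in Q_{\az}(\rn)\iff g(x|x|^{-2})\in Q_{\az}(\rn)$.

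No step is genuinely obstructive once Theorem \ref{t1.2} is granted; the conceptual point that requires the local version of $A_1$ rather than the classical one is that $|z|^{n(\bz-1)}$ lies in $A_1(\rn)$ only in the narrow range $0<\bz\le 1$, while outside that range (in particular for the inversion $\bz=-1$) one really needs to admit $\{0\}$ as a degeneracy set. That is the only subtlety, and it is precisely what makes this application a natural illustration of Theorem \ref{t1.2}.
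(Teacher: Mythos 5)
Your treatment of $\beta>0$ is correct and matches the paper: the pointwise distortion bound and the identity $J_f(z)=|\beta||z|^{n(\beta-1)}$ are right, the verification that $J_f\in A_1(\rn;\{0\})$ is exactly the intended check, and since $\overline\dim_L\{0\}=0$ the second assertion of Theorem \ref{t1.2} gives boundedness of ${\bf C}_f$ for all $\alpha\in(0,1)$.

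The gap is in the case $\beta<0$, which is precisely the case needed for the ``in particular'' conclusion about inversion ($\beta=-1$). You write that ``regarding $f$ as acting on $\rn\setminus\{0\}$ when $\beta<0$ \dots is harmless for ${\bf C}_f$ since the missing point has measure zero,'' and then apply Theorem \ref{t1.2} as if its hypotheses were met. They are not: Theorem \ref{t1.2} requires $f:\rn\to\rn$ to be a quasiconformal homeomorphism, and for $\beta<0$ we have $|f(z)|=|z|^{\beta}\to\infty$ as $z\to 0$, so $f$ cannot even be continuously extended through the origin. This is not a measure-theoretic nicety — $f$ is a homeomorphism of $\rn\setminus\{0\}$ (or of $\overline{\rn}$) but not of $\rn$, and the theorem's statement does not cover such maps. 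The paper says this explicitly (``so we can not apply Theorem \ref{t1.2} directly'') and then re-runs the proof by hand: it splits into the sub-cases $r<|x_0|/4$ (an analogue of Case 1, using $J_f(x)\sim|x_0|^{n(\beta-1)}$ on $B(x_0,3r)$, \eqref{e4.xx1}--\eqref{e4.2}) and $r\ge|x_0|/4$ (an analogue of Case 3, covering $B(0,32r)\setminus\{0\}$ by a dyadic family of Whitney-type balls $\{B_{k,j}\}$), producing $P_5$ and $P_6$. To make your argument rigorous you would either have to (a) first show that the proof of Theorem \ref{t1.2} extends to quasiconformal homeomorphisms of $\rn\setminus E$ when $E$ is a finite set, or (b) carry out the two-case reduction as the paper does. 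As written, the claim that Theorem \ref{t1.2} applies directly to the inversion is a real gap, and it sits at the heart of the ``in particular'' assertion rather than at its periphery.
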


Furthermore, for the Tukia-V\"ais\"al\"a quasiconformal extension $f: \ \rn\to\rn$ of an arbitrary quasiconformal (quasisymmetric) mapping $g:\rr^{n-p}\to\rr^{n-p}$, we obtain the second important consequence of Theorem \ref{t1.2}.

\begin{cor} \label{c1.x3} Given $1\le p<n$, suppose $g:\ \rr^{n-p}\to\rr^{n-p}$ is a quasiconformal mapping when $n-p\ge 2$, or a quasisymmetric mapping when $n-p=1$.
Let $f: \ \rn\to\rn$ be the Tukia-V\"ais\"al\"a's quasiconformal extension of  $g$ as in  \cite{tv}.  Then
the following hold:

\item\rm{(i)} $J_f,\, J_{f^{-1}}\in A_1(\rn;\rr^{n-p})$;

\item\rm{(ii)} ${\bf C}_f,\,{\bf C}_{f^{-1}}$ are bounded on $Q_{\az}(\rn)$ for all
$$0<\az<
\begin{cases}
\frac12 \quad\hbox{when}\quad p=1,\\
1\quad\hbox{when}\quad p\ge 2.
\end{cases}
$$
Consequently, $u\in Q_\az(\rn)$ if and only if $u\circ f\in Q_\az(\rn)$.
\end{cor}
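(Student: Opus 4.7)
The plan is to reduce this corollary to Theorem \ref{t1.2} via a direct dimension computation plus verification of the $A_1$ property in part (i); the heart of the argument sits in part (i), after which part (ii) is essentially bookkeeping. Throughout I would identify $\rr^{n-p}$ with the subspace $\{x\in\rn:x_{n-p+1}=\cdots=x_n=0\}$, on which the Tukia-V\"ais\"al\"a extension $f$ reduces to $g$.

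First I would record that $E=\rr^{n-p}$ is unbounded (since $n-p\ge 1$) and satisfies $\overline\dim_{LG}\,E=n-p$. This is a direct counting estimate: for any ball $B(x_B,r_B)$ meeting $\rr^{n-p}$, the intersection $\rr^{n-p}\cap B$ lies inside an $(n-p)$-dimensional ball of radius $r_B$, hence is covered by at most $C(r_B/r)^{n-p}$ cubes of edge length $r$, and this is sharp. Consequently the admissible range in Theorem \ref{t1.2} reads
\begin{equation*}
0<\az<\min\lf\{1,\,\frac{n-(n-p)}{2}\r\}=\min\lf\{1,\,\frac{p}{2}\r\},
\end{equation*}
which is exactly $\az<1/2$ for $p=1$ and $\az<1$ for $p\ge 2$, matching (ii).

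Next, for part (i), I would exploit the structure of the Tukia-V\"ais\"al\"a extension built in \cite{tv}. On a Whitney decomposition of $\rn\setminus\rr^{n-p}$, the map $f$ on a Whitney cube of scale $\ell\approx\dist(\cdot,\rr^{n-p})$ is assembled from $g$ at the same scale, and standard estimates from \cite{tv} yield a Harnack-type inequality
\begin{equation*}
C^{-1}J_f(y)\le J_f(x)\le C J_f(y)\quad\text{whenever }|x-y|\le\tfrac12\min\{\dist(x,\rr^{n-p}),\,\dist(y,\rr^{n-p})\},
\end{equation*}
with $C$ depending only on $n$, $p$ and the quasiconformality (or quasisymmetry) constant of $g$. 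Given this, let $B=B(x_B,r_B)$ satisfy $2r_B<d(x_B,\rr^{n-p})$; then $\dist(x,\rr^{n-p})\ge r_B$ for every $x\in B$, so $J_f$ is pinched between two comparable constants on $B$, which delivers \eqref{e1.1} with $E=\rr^{n-p}$. Since $f^{-1}$ is a Tukia-V\"ais\"al\"a extension of $g^{-1}$, the same reasoning handles $J_{f^{-1}}$, establishing (i). The expected main obstacle lies precisely here: one needs the Jacobian Harnack inequality from the construction in \cite{tv}, and in particular the case $p=1$ is more delicate because $g$ is only assumed quasisymmetric rather than quasiconformal.

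Finally, once (i) is in hand, Theorem \ref{t1.2} applies verbatim both to $f$ and to $f^{-1}$, and produces the boundedness statement of (ii) on the stated range. The concluding "if and only if" clause follows immediately, since the boundedness of $\mathbf{C}_f$ gives one direction and the boundedness of $\mathbf{C}_{f^{-1}}=\mathbf{C}_f^{-1}$ gives the other.
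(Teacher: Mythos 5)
Your overall reduction to Theorem \ref{t1.2} is the right framework, and the dimension computation $\overline\dim_{LG}\,\rr^{n-p}=n-p$ (left implicit in the paper) is correct and yields the stated range of $\az$. However, there is a genuine gap exactly where you flag the ``main obstacle'': you invoke a Jacobian Harnack inequality as a ``standard estimate from \cite{tv}'', but Tukia--V\"ais\"al\"a does not provide this directly, and you never supply the argument. The property actually delivered by \cite{tv} is that $f$ restricted to $\hh^{n,p}=\rn\setminus\rr^{n-p}$ is $L$-biLipschitz with respect to the hyperbolic-type distance $d_{\hh^{n,p}}$; converting this into the pointwise comparability of $J_f$ on Whitney balls requires two further steps that the paper carries out and your sketch omits. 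First, the biLipschitz property in the hyperbolic metric yields the identity
\begin{equation*}
J_f(z)\sim \frac{[\,d(f(z),\rr^{n-p})\,]^n}{|y|^n},\qquad z=(x,y)\in\rn\setminus\rr^{n-p},
\end{equation*}
by comparing $d_{\hh^{n,p}}(z,w)\sim |z-w|/|y|$ with $d_{\hh^{n,p}}(f(z),f(w))\sim |f(z)-f(w)|/d(f(z),\rr^{n-p})$ for $w$ in a small ball about $z$. Second --- and this is the nontrivial point --- one must show $d(f(z),\rr^{n-p})\sim d(f(z_0),\rr^{n-p})$ for $z\in B(z_0,r)$ with $r\le |y_0|/2$, which the paper derives from the quasisymmetry of $f$ \emph{itself} (valid since $f$ is a global quasiconformal map of $\rn$), choosing a nearest point $w\in\rr^{n-p}$ to $f(z_0)$ and using the modulus-of-quasisymmetry bound $|f(z)-f(w)|\le \eta(2)\,|f(z_0)-f(w)|$. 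Neither step is a black-box citation of \cite{tv}, and the second step, in particular, does not rely on the quasisymmetry of $g$ (which you correctly worry about in the $p=1$ case) but on that of the extension $f$. Supplying these two steps would close the gap; as written, your proof asserts the key estimate rather than proving it.
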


The proof of Theorem \ref{t1.2} relies on a new characterization of $Q$-spaces
established in Section 3. This technical result allows us to employ our
Muckenhoupt assumption
and the  control on the number of Whitney-type balls guaranteed by
our dimension estimate. We expect that our approach will allow one to handle
various other function spaces as well.

Our assumption on the control of the fractal size of the degerancy set, whenever which is bounded or unbounded,  is
necessary in the following sense.

 \begin{thm} \label{t1.4}

 Let $n\ge2$ and $0<\az_0<1$. There is a bounded set $E_{\az_0}$ with $\overline\dim_LE_{\az_0}=n-2\az_0$ and a quasiconformal  (Lipschitz) mapping
 $f:\rn\to\rn$ with $J_f\in A_1(\rn; E_{\az_0})$ for which ${\bf C}_f$ is not bounded on $Q_\az(\rn)$ for any $\az\in(\az_0,\,1)$.
 \end{thm}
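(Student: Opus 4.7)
The plan is to produce, for any $\alpha_0\in(0,1)$ with $s_0:=n-2\alpha_0$, a self-similar Cantor-type set $E_{\alpha_0}\subset[0,1]^n$ of local self-similar Minkowski dimension $s_0$, together with a single Lipschitz quasiconformal map $f:\rn\to\rn$ with $J_f\in A_1(\rn;E_{\alpha_0})$, such that $\mathbf{C}_f$ is unbounded on $Q_\alpha(\rn)$ simultaneously for every $\alpha\in(\alpha_0,1)$.

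For the Cantor set, fix a large integer $N$ and set $M:=\lfloor N^{s_0}\rfloor$; subdivide $[0,1]^n$ at each stage into $N^n$ equal subcubes and retain $M$ of them in a well-separated configuration, giving a nested intersection $E_{\alpha_0}$ with $N_{\rm cov}(N^{-k},E_{\alpha_0})\asymp M^k$, and hence $\overline\dim_L E_{\alpha_0}=\log M/\log N$ by Definition \ref{d1.1}(i). Any $s_0\in(0,n)$ can be hit exactly by tuning $N$ or alternating two admissible pairs $(M,N)$. For the map, take the Whitney decomposition $\{Q\}$ of $\rn\setminus E_{\alpha_0}$ and fix an exponent $\tau>0$; on each Whitney cube $Q$ of side $r_Q\asymp d(Q,E_{\alpha_0})$, let $f$ act (up to the patching below) as a similarity of ratio $r_Q^\tau$, positioned so that the images are hierarchically nested in correspondence with the Cantor construction. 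Standard Tukia--V\"ais\"al\"a-type radial gluing across shared faces then produces a global homeomorphism of $\rn$, equal to the identity off a neighbourhood of $[0,1]^n$, which is $K$-quasiconformal with $K$ uniform in scale and satisfies $|Df(x)|\asymp d(x,E_{\alpha_0})^\tau$ and $J_f(x)\asymp d(x,E_{\alpha_0})^{n\tau}$ on $\rn\setminus E_{\alpha_0}$. Thus $f$ is Lipschitz (as $\tau>0$) and $J_f\in A_1(\rn;E_{\alpha_0})$, because a power of the distance to a closed set is trivially in the local Muckenhoupt class \eqref{e1.1}.

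To produce the failure, fix $\alpha\in(\alpha_0,1)$ and pick $u\in Q_\alpha(\rn)$ lying just on the boundary of the $Q_\alpha$-regularity class --- e.g.\ a truncated Riesz-type potential centred at a point $y_0\in f([0,1]^n)$, calibrated so that $\|u\|_{Q_\alpha(\rn)}<\infty$ while its H\"older exponent is slightly below $\alpha$. For a ball $B_k$ commensurate with a $k$-th generation selected Cantor cube, change variables $u'=f(x),\,v'=f(y)$ in the $Q_\alpha$-double integral of $u\circ f$ over $B_k$; the snowflake-type estimate $|f(x)-f(y)|\asymp \max(d(x,E_{\alpha_0}),d(y,E_{\alpha_0}))^\tau|x-y|$ coming from Step 2, together with the Ahlfors regularity of $E_{\alpha_0}$, reduces the contribution to a sum indexed by the $M^k=N^{ks_0}$ generation-$k$ cubes. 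After balancing scalings, the resulting geometric series is governed by the exponent $s_0-(n-2\alpha)=2(\alpha-\alpha_0)$, which diverges precisely when $\alpha>\alpha_0$, producing $\|u\circ f\|_{Q_\alpha(\rn)}=\infty$.

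The main obstacle is combining within one map $f$ the following three requirements: global $K$-quasiconformality with a single $K$ across all generations (requiring a careful Tukia--V\"ais\"al\"a patching in the shells so that the distortion across shared faces stays bounded), the degeneracy $J_f\in A_1(\rn;E_{\alpha_0})$, and a Jacobian profile $d(\cdot,E_{\alpha_0})^{n\tau}$ forcing the sharp divergence threshold at exactly $\alpha_0=(n-s_0)/2$. The self-similarity of $E_{\alpha_0}$ plays a double role: it keeps the Whitney patching uniform across all scales, and it makes the final geometric series homogeneous, with common ratio encoding the identity $s_0=n-2\alpha_0$. A last subtlety is the test function, which must be singular enough to witness the amplification of $\mathbf{C}_f$ for every $\alpha>\alpha_0$ yet regular enough to keep $u\in Q_\alpha(\rn)$; matching the singularity of $u$ against the snowflake exponent $\tau$ is exactly what pins the threshold $\alpha_0$ to the dimension of $E_{\alpha_0}$.
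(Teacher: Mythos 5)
Your proposal takes a route that is substantially different from the paper's, and it has several genuine gaps.

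The paper's construction is deliberately \emph{not} a Cantor set in the sense you propose, nor a Whitney-snowflake map. The degeneracy set $E_a$ in the paper is the closure of the set of \emph{centers} $\{z_{m,j}\}$ of the cubes appearing in a Cantor-type construction, and the map $f$ is the identity outside a family of \emph{pairwise disjoint} balls $B\bigl(z_{m,j},\tfrac12 a[(1-a)/2]^m\bigr)$, inside each of which $f$ is a radial stretching $x\mapsto c_m |x-z_{m,j}|^\beta(x-z_{m,j})+z_{m,j}$ fixing the boundary sphere. Because each radial stretching maps its ball onto itself, there is no gluing problem at all: $f$ is automatically a global homeomorphism, quasiconformality is an explicit computation, and $J_f\in A_1(\rn;E_a)$ falls out immediately. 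This completely avoids the hardest step in your plan, namely producing a \emph{global} quasiconformal homeomorphism $f$ of $\rn$ with $J_f\asymp d(\cdot,E)^{n\tau}$ on \emph{all} of $\rn\setminus E$ by "Tukia--V\"ais\"al\"a-type radial gluing across shared faces." Such a map must simultaneously compress a full neighborhood of a Cantor set, remain a homeomorphism \emph{onto} $\rn$, stay uniformly quasiconformal across infinitely many Whitney generations, and be Lipschitz; you do not indicate how the gluing is carried out or why the distortion stays bounded on cubes adjacent across different generations. In the same vein, the asserted snowflake identity $|f(x)-f(y)|\asymp\max\bigl(d(x,E),d(y,E)\bigr)^\tau|x-y|$ is plausible for $x,y$ in the same or adjacent Whitney cubes, but you give no argument for pairs separated by $E$, and that is exactly the regime the double integral in $\Phi_\alpha$ samples.

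The test-function side has a parallel gap. The paper works with a \emph{sequence} of compactly supported Lipschitz bumps $u_m$, one bump $u_{m,j}$ on a small ball $B_{m,j}$ near each center $z_{m,j}$, and shows that $\|u_m\circ f\|_{Q_\alpha}^2/\|u_m\|_{Q_\alpha}^2\to\infty$ by explicit two-sided estimates \eqref{e5.1}--\eqref{e5.2}, in which the stretching exponent $\beta$ and the construction parameter $\ell$ enter essentially. Your proposal instead uses a single "truncated Riesz-type potential" whose "H\"older exponent is slightly below $\alpha$." This is not made precise, the change of variables is only sketched, and the claim that the resulting series is governed by $s_0-(n-2\alpha)=2(\alpha-\alpha_0)$ is suspicious because the exponent $\tau$ of the map disappears from the final balance without explanation; in the paper's computation the analogous quantity $\beta$ and the auxiliary parameter $\ell$ (which encodes the scale of the bump inside each Cantor cube) must be tuned against $\alpha$, $a$, and $n$, and it is precisely this tuning that pins the threshold to $\alpha_0$. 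As written, the divergence argument cannot be checked, and the sign/growth of the geometric ratio is not established. To close the gaps you would need to (i) give an explicit gluing construction or replace your map by disjoint radial stretchings as in the paper, (ii) prove (or replace) the global snowflake estimate, and (iii) either construct the single critical $u$ explicitly and carry out the double integral honestly or switch to the paper's sequential test functions with a quantitative ratio estimate.
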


The main idea in the constructions for Theorem \ref{t1.4} is to patch up suitable pieces of radial stretchings in a family of pairwise disjoint balls. In this manner,
we also construct an unbounded set $\wz E_{\az_0}\subset\zz^n$ with $\overline\dim_{LG}{\wz E}_{\az_0}=n-2\az$
but $\overline\dim_L\wz E_{\az_0}=0$ and an associated quasiconformal mapping as in Theorem \ref{t1.4}; see below. This also shows the need for $\overline\dim_{LG}$ in Theorem 1.3.

\begin{thm}\label{t5.1}
 Let $n\ge2$ and $0<\az_0<1$. There exists a unbounded set   $\wz E_{\az_0}\subset \zz^n$ with $\overline\dim_{LG}\wz E_{\az_0}=n-2\az_0$ but $\overline\dim_L\wz E_{\az_0}=0$,
and a quasiconformal  (Lipschitz) mapping
 $f:\rn\to\rn$ with $J_f\in A_1(\rn; \wz E_{\az_0})$ for which ${\bf C}_f$ is not bounded on $Q_\az(\rn)$ for any $\az\in(\az_0,\,1)$.
\end{thm}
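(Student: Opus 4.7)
The plan is to adapt the patching construction from Theorem \ref{t1.4} to a set of stretching centers that is globally $(n-2\az_0)$-Minkowski but locally $0$-Minkowski, thereby confirming that the global dimension (rather than the local one) is the essential quantity in Theorem \ref{t1.2}. Write $s=n-2\az_0$. For each $j\ge 0$, I choose a maximal $1$-separated subset $\wz E^{(j)}\subset\zz^n\cap\bigl(B(0,2^{j+1})\setminus B(0,2^j)\bigr)$ of cardinality $\asymp 2^{js}$; this is possible because $s<n$, so the lattice is much denser than the target count. Set $\wz E_{\az_0}=\bigcup_j \wz E^{(j)}$. The lattice $1$-separation forces every ball $B$ with $r_B\le 1$ to meet $\wz E_{\az_0}$ in a uniformly bounded number of points, so $N_{\rm cov}(r,\wz E_{\az_0}\cap B)\le C$ independently of $r$, giving $\overline\dim_L\wz E_{\az_0}=0$. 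On the other hand, the designed counting $\#(\wz E_{\az_0}\cap B(0,R))\asymp R^s$, together with a matching uniform upper bound on every large ball, yields $\overline\dim_{LG}\wz E_{\az_0}=s=n-2\az_0$.

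Next, I build $f$ by patching one radial stretching per point of $\wz E_{\az_0}$, in complete parallel with the proof of Theorem \ref{t1.4}. Around each $x_k\in\wz E_{\az_0}$ form the shells $B(x_k,1/4)\subset B(x_k,1/2)$; these are pairwise disjoint by the $1$-separation. On $B(x_k,1/4)$ install a scaled copy of the same radial stretching used in Theorem \ref{t1.4} (in particular with the same parameter $\bz>1$), interpolate by a bi-Lipschitz radial bridge to the identity on $B(x_k,1/2)\setminus B(x_k,1/4)$, and let $f$ be the identity elsewhere. Since each piece is a scaled copy of a single model, $f$ is globally $K(\bz)$-quasiconformal and globally $L(\bz)$-Lipschitz. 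The degeneracy condition $J_f\in A_1(\rn;\wz E_{\az_0})$ reduces, for any ball $B=B(x_B,r_B)$ with $2r_B<d(x_B,\wz E_{\az_0})$, either to the trivial case $J_f\equiv 1$ when $B$ is disjoint from $\bigcup_k B(x_k,1/2)$, or to the scale-invariant single-center $A_1$-estimate already used in Theorem \ref{t1.4}.

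The main obstacle is to show that $\mathbf{C}_f$ fails to be bounded on $Q_\az(\rn)$ for every $\az\in(\az_0,1)$. Note that a single stretching center does not yield failure, by Corollary \ref{c1.3}, so the construction of test functions must exploit the cumulative effect of the many centers at arbitrarily large scales. Paralleling the proof of Theorem \ref{t1.4}, I expect to produce a sequence $\{u_N\}\subset Q_\az(\rn)$ with $\sup_N\|u_N\|_{Q_\az(\rn)}<\fz$ and $\|u_N\circ f\|_{Q_\az(\rn)}\to\fz$, by testing on balls $B(0,R_N)$ with $R_N\to\fz$. The quantitative input is that $B(0,R_N)$ contains $\asymp R_N^{n-2\az_0}$ stretching centers, so the normalising prefactor $R_N^{2\az-n}$ in the $Q_\az$-seminorm combines with the total number of per-center contributions to an overall factor of order $R_N^{2(\az-\az_0)}$, which diverges precisely when $\az>\az_0$. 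The delicate point, for which I would follow the argument of Theorem \ref{t1.4}, is to design $u_N$ (likely via a Frostman-type superposition of atoms tied to $\wz E_{\az_0}$, with coefficients decaying with $|x_k|$) so that the source-side seminorm stays bounded while the cumulative pullback seminorm diverges.
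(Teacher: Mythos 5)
Your plan parallels the paper's in spirit: take an unbounded, globally $(n-2\az_0)$-dimensional but locally $0$-dimensional lattice set, patch a fixed radial stretching around each of its points, and test with sums of bumps whose scales are tied to the geometry of the set. The set you choose is genuinely different, though: the paper works with the explicit, multiplicatively self-similar set $(2^{\nn_\theta})^n$, $\theta=(n-2\az_0)/n$, defined in \eqref{e2.y1}, and establishes its dimensions in Lemma~\ref{lemma 2.2} via a multi-scale case analysis; you propose instead a generic dyadic-annulus selection of $\asymp 2^{js}$ points of $\zz^n$ per annulus. That freedom is appealing, but it also introduces the first real gap.

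Concretely, two steps would not close as written. (1) The claim $\overline\dim_{LG}\wz E_{\az_0}=n-2\az_0$ does not follow from the construction as stated. ``A maximal $1$-separated subset of $\zz^n\cap(B(0,2^{j+1})\setminus B(0,2^j))$ of cardinality $\asymp 2^{js}$'' is just an arbitrary collection of that many lattice points — nothing prevents them from clustering inside a subcube of side $\asymp 2^{js/n}$. A ball $B$ of comparable radius centered in such a cluster with $r=1$ gives $N_{\rm cov}(1,\wz E_{\az_0}\cap B)\asymp 2^{js}\asymp r_B^{\,n}$, so the ratio in Definition~\ref{d1.1}(ii) is close to $n$, not to $s$. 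To obtain the uniform upper bound $N_{\rm cov}(r,\wz E_{\az_0}\cap B)\ls (r_B/r)^s$ you must impose a quantitative separation (of order $2^{j(n-s)/n}$ inside the $j$th annulus) and then check the bound across several annuli; the analogue of this verification is what Lemma~\ref{lemma 2.2} performs for $(2^{\nn_\theta})^n$. (2) The unboundedness argument is offered only as a one-parameter heuristic ($R_N\to\fz$ producing a factor $R_N^{2(\az-\az_0)}$), whereas the paper's proof of Theorem~\ref{t5.1} couples \emph{two} scales: a bump radius $2^{-m}$ and a truncation radius $2^{\ell}$ with $\ell=m(n-2\az)/(2\az-n+\theta n)$, chosen so that $\|u_m\|_{Q_\az}^2\ls 2^{-2m}$ while $\|u_m\circ f\|_{Q_\az}^2\gs 2^{-2m}2^{cm}$, with divergence coming from the ratio. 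The positivity $2\az-n+\theta n>0$ is exactly equivalent to $\az>\az_0$, which is where the threshold enters. You rightly flag this as the delicate point and point back to the Theorem~\ref{t1.4} machinery, which is the correct reference, but as written this step is not yet a proof.
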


This paper is organized as follows:
Section \ref{s2} clarifies the relationship between the Minkowski dimension and the local Minkowski dimension $\overline\dim_L$ or the global Minkowski dimension $\overline\dim_{LG}$
and also computes  $\overline\dim_L$ and $\overline\dim_{LG}$ for the sets in  Theorems \ref{t1.4} and \ref{t5.1};
Section \ref{s3} explores a new aspect of $Q_\alpha(\rn)$, which  will be used in the proof of Theorem \ref{t1.2}; in Section \ref{s4}, we prove   Theorem \ref{t1.2};
Section 5 contains the proofs of Corollaries \ref{c1.3} and \ref{c1.x3}; Section \ref{s5} is devoted to the proofs of Theorems \ref{t1.4} and  \ref{t5.1}.

Finally, as the converse of the above open question, given a homeomorphism  $f:\rn\to\rn$ for which  the composition operator ${\bf C}_f$ is a bounded  on ${Q}_\az(\rn)$
for some $\az\in(0,\,1)$,
one would like to know if $f$ is necessarily quasiconformal. The answer is actually in the positive, at least under suitable regularity assumptions on the homeomorphism in question. Since this requires some work, the details will be given in a forthcoming paper.

{\it Notation.}\ \ In the sequel, we denote by $C$ a positive constant which is independent of the main parameters, but may vary from line to line. The symbol $A\ls B$ or $B\gs A$
means that $A\le CB$. If $A\ls B$ and $B\ls A$, we then
write $A\sim B$. For any locally integrable function $u$ and measurable set $X$,
we denote by $\bbint_X u $ the average
of $u$ on $X$, namely, $\bbint_X u  \equiv\frac 1{|X|}\int_Xu\,dx$.
For a set $\boz$ and $x\in\rn$, we use $d(x,\,\boz)$ to denote $\inf_{z\in\boz}|x-z|$, the distance from $x$ to $\boz$.
For $\lambda Q$, we mean   the cube  concentric with $Q$, with sides parallel to the axes, and with length
$\ell(\lambda Q) =\lambda \ell(Q)$; similarly,   $\lz B$ denotes the ball  concentric with $Q$ with radius $\lz r_B$, where $r_B$ is the radius of $B$.

\section{Local and global Minkowski dimensions}\label{s2}

In this section, we  clarify the relation between the Minkowski dimension and the above dimensions $\overline\dim_L$ and $\overline\dim_{LG}$.
Recall that for a bounded set $E\subset\rn$, its Minkowski dimension $\dim_ME$ is defined by
$$\overline\dim_M\, E =\limsup_{r\to0} \frac{\log N_{\rm cov}(r,\,E )}{ \log(1/r )},$$
 where $N_{\rm cov}(r,\,E )$ is the minimum number of cubes with edge length $r$
 required  to cover $E$.

\begin{lem}\label{lemma 2.1}

\item\rm(i) For every set $E\subset\rn$ and every $R\ge 1$,  we have
 $$\overline{\dim}_L\,E =\liminf_{N\to\fz}\limsup_{r\to 0}\sup_{{B\subset\rn} \atop {Nr\le r_B\le R}}\frac{\log N_{\rm cov}(r,E\cap B)}{\log(r_B/r)}.$$

\item\rm(ii) For every set $E\subset\rn$,
we always have
$$0 \le \sup_B\overline{\dim}_M (E\cap B)\le \overline{\dim}_L\, E \le \overline{\dim}_{LG}\,E \le n,$$
where the supremum is taken over all balls in $\rn$.

\item\rm(iii) If $E\subset F$, then $\overline{\dim}_L\, E\le \overline{\dim}_L\,F $
and $\overline{\dim}_{LG}\,E\le \overline{\dim}_{LG}\,F $.

\end{lem}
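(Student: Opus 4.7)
The plan is to handle the three parts separately, since each is essentially a bookkeeping exercise about the defining suprema and limits.

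For part (i), I would note that enlarging the range of $r_B$ from $[Nr,1]$ to $[Nr,R]$ with $R\ge 1$ can only increase the supremum, so the ``$\le$'' direction for $\overline{\dim}_L E$ is immediate. For the reverse inequality, I would show that balls with $r_B\in(1,R]$ contribute no more asymptotically than unit balls. To that end, cover any such ball $B$ by a bounded number $C(R,n)$ of balls $B_i'$ of radius $1$; then subadditivity gives $N_{\rm cov}(r,E\cap B)\le C(R,n)\max_i N_{\rm cov}(r,E\cap B_i')$. Using $\log(r_B/r)\ge\log(1/r)$ (since $r_B\ge 1$) in the denominator and sending $r\to 0$, the term $\log C(R,n)/\log(1/r)$ vanishes, and what remains is dominated by the supremum over unit balls, which appears in the original definition of $\overline{\dim}_L E$.

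For part (ii), I would treat the four inequalities in turn. The bound $0\le \sup_B \overline{\dim}_M(E\cap B)$ is immediate from the nonnegativity of $N_{\rm cov}$ (taking any $B$ meeting $E$; the case $E=\emptyset$ is by convention). To get $\sup_B \overline{\dim}_M(E\cap B)\le \overline{\dim}_L E$, I would fix a ball $B$ of radius $r_B$ and apply part (i) with $R=\max(r_B,1)$, so that $B$ is itself admissible in the sup once $r$ is small enough that $Nr\le r_B$; since $\log(r_B/r)/\log(1/r)\to 1$ as $r\to 0$, the denominator in the local-dimension expression becomes asymptotically equivalent to that in $\overline{\dim}_M$, yielding the desired bound. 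The inequality $\overline{\dim}_L E\le \overline{\dim}_{LG} E$ is a direct comparison: the global version enlarges the admissible range of $r_B$ (removing the upper bound $1$) and replaces $\limsup_r$ by $\sup_r$, so it dominates term by term in the $\liminf_N$. Finally, $\overline{\dim}_{LG} E\le n$ follows from the trivial upper bound $N_{\rm cov}(r,E\cap B)\le C(r_B/r)^n$; taking logarithms and dividing by $\log(r_B/r)\ge\log N$, the constant is absorbed as $N\to\infty$.

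Part (iii) is an immediate monotonicity argument: whenever $E\subset F$, one has $N_{\rm cov}(r,E\cap B)\le N_{\rm cov}(r,F\cap B)$ for every ball $B$ and every $r>0$, so the defining ratios satisfy the same inequality pointwise, and the suprema, $\limsup_r$ (or $\sup_r$), and $\liminf_N$ all preserve this order.

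None of these steps is conceptually hard; the only mildly delicate point is in part (i), where one has to verify that the covering constant $C(R,n)$ arising from reducing to unit balls is genuinely negligible in the logarithmic ratio. This is handled uniformly by the estimate $\log(r_B/r)\ge\log(1/r)\to\infty$ as $r\to 0$, which is the main computational ingredient of the proof.
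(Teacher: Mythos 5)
Your argument follows the paper's proof closely. For part (i) the paper uses the same covering-by-unit-balls device and the same splitting of the logarithm; it kills the $\log c_nR^n$ term by dividing by $\log N$ (using $r_B\ge Nr$), whereas you send $r\to 0$ so that $\log(1/r)\to\infty$ — both are valid and give the same conclusion. For part (ii) the paper only spells out $\overline{\dim}_{LG}E\le n$ via $N_{\rm cov}(r,E\cap B)\le (2r_B/r)^n$ (which is your $C(r_B/r)^n$ bound) and remarks that the remaining inequalities follow from (i) and the definitions; your explicit derivation of $\sup_B\overline{\dim}_M(E\cap B)\le\overline{\dim}_L E$ (fix $B$, take $R\ge r_B$, use $\log(r_B/r)/\log(1/r)\to1$) and of $\overline{\dim}_L E\le\overline{\dim}_{LG}E$ (enlarge the $r_B$-range and pass from $\limsup_r$ to $\sup_r$) is exactly what is meant. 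Part (iii) is identical. In short: correct, and essentially the paper's argument.
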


\begin{proof}
(i) From the definition, we always have
$$\overline{\dim}_L\,E \le\liminf_{N\to\fz}\limsup_{r\to 0}\sup_{{B\subset\rn} \atop {Nr\le r_B\le R}}\frac{\log N_{\rm cov}(r,E\cap B)}{\log(r_B/r)}.
$$
Towards the reverse inequality, notice that
every ball $B$ of radius $1\le r_B\le R$ can be covered by $c_n R^n$  balls $B_i$ of radii $1$. So
 $$N_{\rm cov}(r,E\cap B)\le c_n R^n\sup  \{N_{\rm cov}(r,E\cap \wz B):\ {\wz B\subset \rn} \ {\rm with} \ {r_{\wz B}=1}\}$$
 and hence for all $r<r_B/N$ and $r<1$, we have
$$\frac{\log N_{\rm cov}(r,E\cap B)}{\log{r_B/r}}\le
\frac{\log c_n R^n}{\log N}+ \sup_{{\wz B\subset \rn}\atop{r_{\wz B}=1}} \frac{\log N_{\rm cov}(r,E\cap \wz B)}{\log(1/r)}.$$
Since the first term on the right-hand side tends to $0$ as $N\to\fz$, by the definition of $\overline\dim_L E$, we obtain the desired inequality.

(ii) Obviously, $ \overline{\dim}_{LG}(E)\le n $ is obtained from  $N_{\rm cov}(r,\,E\cap B) \le (2 r_B/r)^n$
for every ball $B$ with radius  $r_B\ge Nr$;
indeed,
$$\overline\dim_{LG}\,E\le \liminf _{N\to\fz}  \sup_{r>0}
\sup_{B\subset\rn\atop r_B\ge Nr}\frac{n\log(r_B  /r )+n}{ n\log(r_B  /r ) }
=  \liminf _{N\to\fz} \frac{n\log N  +n}{ n\log N }=n.
$$
The other inequalities follow  from the definitions and (i) directly.

(iii) These statements are trivial.
\end{proof}

If $E$ is a set of finitely many points, observing that
$N_{\rm cov}(r,\,E\cap B) \ls 1$
for every ball $B$ with radius  $r_B\ge Nr$,  we obtain
$$\overline\dim_{LG}\,E\le
  \liminf _{N\to\fz} \frac{ \log C}{  \log N }=0,
$$
which implies that
$$\overline{\dim}_M\,E = \overline{\dim}_L\,E = \overline{\dim}_{LG}\,E=0.$$

However, for a countable set $E$,
 $\overline \dim_{LG}$, $\overline \dim_L$ and
 $\sup_B \overline\dim_M(E\cap B)
$ may be very different.
Write
 $ ( \nn )^n  = \nn  \times\cdots\times  {\nn }$
  and
$ (2^\nn)^n=2^\nn\times\cdots\times 2^\nn$
with  $2^\nn=\{2^k: \,k\in\nn\}$.
For  $\theta\in[0,\,1]$, set
\begin{equation}\label{e2.y1}2^{\nn_\theta}:=\bigcup_{k\in\nn\cup\{0\}}A_{k,\,\theta}:= \bigcup_{k\in\nn\cup\{0\}}\{ 2^k,\,2^{k}+1,\,\,\cdots,\, 2^{k}+ 2^{[\theta k]} \}.
\end{equation}
where $ [  \theta k ] $  is the largest integer less than or equal to $\theta k$.
Write $(2^{\nn_\theta})^n= 2^{\nn_\theta}\times\cdots\times 2^{\nn_\theta}$.
Observe that
$$
\begin{cases}
2^{\nn_\theta}=\nn\cup\{0\}\ \ \hbox{when}\ \ \theta=1;\\
2^{\nn_\theta}=2^{\nn}\cup\{1\} \ \ \hbox{when}\ \ \theta=0.
\end{cases}
$$
We always have $$\overline \dim_M ((2^{\nn_\theta})^n\cap B)= \overline \dim_L((2^{\nn_\theta})^n\cap B)=\overline {\rm dim}_{LG} ((2^{\nn_\theta})^n\cap B) =0$$ for all balls $B$ and all $\theta\in[0,\,1]$
since $(2^{\nn_\theta})^n\cap B$ only contains finitely many points.

\begin{lem}\label{lemma 2.2}
Let  $\theta\in[0,\,1]$. Then
\begin{equation}\label{equality 2.1}\overline\dim_{LG} (2^{\nn_\theta})^n  =\theta n ;
\end{equation}
in particular,
 $ \overline\dim_{LG} (2^\nn)^n  =0 $ and  $\overline\dim_{LG}  \nn ^n  =n  $.
But
$\overline\dim_{L }(2^{\nn_\theta})^n= 0.$
\end{lem}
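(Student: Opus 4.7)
The plan is to exploit the product structure of $E:=(2^{\nn_\theta})^n$. For any ball $B\subset\rn$ of radius $r_B$, letting $I_i$ denote the projection of $B$ onto the $i$-th coordinate axis (an interval of length $2r_B$), one has $E\cap B\subseteq\prod_{i=1}^n(2^{\nn_\theta}\cap I_i)$ and hence
$$N_{\rm cov}(r,E\cap B)\le\prod_{i=1}^n N^{(1)}_{\rm cov}(r,2^{\nn_\theta}\cap I_i),$$
where $N^{(1)}_{\rm cov}$ denotes the one-dimensional covering number by intervals of length $r$. The key one-dimensional estimate, which I would establish by analyzing the $r$-neighborhood of $2^{\nn_\theta}\cap I$ (with $I$ an interval of length $L$) and splitting its blocks $A_{k,\theta}$ into those with $2^k\ge r$, which remain separated from their neighbors, and those with $2^k<r$, whose $r$-neighborhoods merge into a single interval of length $\ls r$, is
$$N^{(1)}_{\rm cov}(r,2^{\nn_\theta}\cap I)\ls \frac{L^\theta}{r}+\log\frac{L}{r}+1\qquad\hbox{for }0<r\le L.$$
The crucial feature is that the logarithmic term involves $\log(L/r)$, not $\log L$.

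With this in hand I would establish $\overline\dim_{LG}(2^{\nn_\theta})^n=\theta n$. For the upper bound, fix $N\ge 2$, $r>0$, and $B$ with $r_B\ge Nr$. When $r_B<1$ a direct count shows $|E\cap B|\le 3^n$, so the ratio in the definition of $\overline\dim_{LG}$ is $O(1/\log N)$. When $r_B\ge 1$, raising the one-dimensional bound to the $n$-th power produces $\log N_{\rm cov}(r,E\cap B)\le n\log(r_B^\theta/r+\log(r_B/r))+O(1)$; separating the regimes $r_B^\theta/r\ge\log(r_B/r)$ and its complement, and additionally invoking $N_{\rm cov}(r,E\cap B)\le|E\cap B|\ls r_B^{\theta n}$ when $r<1$, an elementary manipulation based on $\log(r_B/r)\ge\log N$ yields ratio $\le n\theta+o_N(1)$. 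For the matching lower bound (with $\theta>0$), given $N$ choose $R\ge N$ large, let $B$ be the ball of radius $r_B=R\sqrt n/2$ containing $[0,R]^n$, and take $r=1/(2\sqrt n)$: since integer points are pairwise at distance $\ge 1$ apart, every cube of edge $r$ captures at most one point of $\zz^n$, whence
$$N_{\rm cov}(r,E\cap B)\ge|E\cap[0,R]^n|=|2^{\nn_\theta}\cap[0,R]|^n\gs R^{\theta n},$$
and the corresponding ratio tends to $\theta n$ as $R\to\infty$. Taking $\liminf_N$ in both directions yields $\overline\dim_{LG}(2^{\nn_\theta})^n=\theta n$; the special values $\theta=0$ and $\theta=1$ specialize to $\overline\dim_{LG}(2^\nn)^n=0$ and $\overline\dim_{LG}\nn^n=n$.

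The equality $\overline\dim_L(2^{\nn_\theta})^n=0$ is much easier: under the constraint $r_B\le 1$, each projection $I_i$ contains at most three integers, so $|E\cap B|\le 3^n$ and $N_{\rm cov}(r,E\cap B)\le 3^n$ independently of $r$, forcing the ratio to be at most $n\log 3/\log N\to 0$. The most delicate part of the entire argument is the one-dimensional bound in the first paragraph: replacing $\log(L/r)$ by the coarser $\log L$ would blow up the estimate when $r$ is chosen comparable to $r_B$ (where $\log(r_B/r)\ge\log N$ can stay bounded while $\log r_B$ is unconstrained) and spuriously force the ratio to exceed $n\theta$, contradicting the trivial bound $\overline\dim_{LG}E\le n$ from Lemma~\ref{lemma 2.1}(ii).
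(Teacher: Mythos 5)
Your arguments for $\overline\dim_L(2^{\nn_\theta})^n=0$ and for the lower bound $\overline\dim_{LG}(2^{\nn_\theta})^n\ge\theta n$ are correct and essentially match the paper's, and passing through a one-dimensional covering estimate and raising it to the $n$-th power is a reasonable organizational choice. However, the one-dimensional estimate your upper bound rests on is false, and the failure is not a matter of constants. Fix $\theta\in(0,1)$, take $k$ large, put $L=2^{[\theta k]}$, $I=[2^k,\,2^k+L]$ (an interval of length $L$), and $r=1$. Then $2^{\nn_\theta}\cap I$ contains the entire block $A_{k,\theta}=\{2^k,\,2^k+1,\ldots,2^k+L\}$, a run of $L+1$ consecutive integers, so $N_{\rm cov}(1,\,2^{\nn_\theta}\cap I)\gs L$, whereas your claimed bound $L^\theta/r+\log(L/r)+1$ is only of order $L^\theta$, exponentially too small for $\theta<1$. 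Your block analysis tacitly anchors $I$ near the origin, where $[0,L]$ meets only blocks $A_{k,\theta}$ with $2^k\le L$ and hence $2^{[\theta k]}\ls L^\theta$; it does not account for an interval of length $L$ lying deep inside a single long block far out along the axis.

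This is worth flagging because the paper's own proof has the same lacuna: the step asserting $(2^{\nn_\theta})^n\cap B\subset[0,\,2^{m+2}]^n$ whenever $2^m<r_B\le 2^{m+1}$ is valid only for balls near the origin. Lifting the example above to $\rn$ -- take $r=1$ and the ball of radius $L/2$ centered at $(2^k+L/2,\ldots,2^k+L/2)$ with $L=2^{[\theta k]}$ as large as one likes -- gives $N_{\rm cov}(1,(2^{\nn_\theta})^n\cap B)\gs L^n$ and $\log N_{\rm cov}/\log(r_B/r)\to n$, and this can be arranged for every $N$. So with the set as literally defined the conclusion $\overline\dim_{LG}(2^{\nn_\theta})^n=\theta n$ fails for $\theta\in(0,1)$: $(2^{\nn_\theta})^n$ contains arbitrarily large solid cubes of lattice points, and no covering bound depending only on the length $L$ of $I$ and on $r$ with exponent $\theta$ can hold uniformly over such intervals. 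The obstruction is therefore in the choice of the set (the blocks $A_{k,\theta}$ would need to be spread out rather than consecutive), not something you can fix by sharpening your estimate.
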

\begin{proof}
We first show that $\overline\dim_{L}\, (2^{\nn_\theta})^n=0.$
Observe that each  $B\subset\rn$ with $ r_B\le 1$ contains at most a uniform  number  of points in $\zz^n$.
So for each $N\ge1$ and $r\in(0,\,r_B/N)$, we can cover  $B\cap (2^{\nn_\theta})^n$ by a uniform  number  of balls of radii $r $,
that is,
  $N_{\rm cov}(1,\,(2^{\nn_\theta})^n\cap B)\ls 1$,  which implies that
 $\dim_{L}\,(2^{\nn_\theta})^n\le 0 $ by definition. So
 by Lemma \ref{lemma 2.1}, $\overline\dim_{L}\,(2^{\nn_\theta})^n=0.$

To show \eqref{equality 2.1}, we first consider the easy cases $\overline\dim_{LG}\nn ^n =n  $
and  $ \overline\dim_{LG}(2^\nn)^n =0 $.
Indeed, for  every ball $B
\subset\rn$ with $r_B= N$, we have
 $$N_{\rm cov}(1,\,\nn^n\cap B)= \sharp (\nn^n\cap B)\ge ( N/\sqrt n )^n,$$
 which implies that
$$\overline\dim_{LG}\,\nn ^n\ge   \liminf _{N\to\fz}
 \frac{\log (N/\sqrt n)^n}{ \log N   }=n,$$
 and hence, by Lemma \ref{lemma 2.1}, $\overline\dim_{LG} \nn ^n =n.$

On the other hand, for each $N$ and $r>0$, if $r\le 1$  and $Nr<r_B$, we have
$$N_{\rm cov}(r,\,(2^\nn)^n \cap B)\le  (\log  r_B )^n  ;$$
 if $2^k<r\le  2^{k+1}$ for some $k\ge 0$, we have
 $$N_{\rm cov}(r,\,(2^\nn)^n\cap B)\le \sqrt n [\log (r_B/r+2)]^n. $$
Hence
$$\overline\dim_{LG}\,(2^\nn)^n\le \liminf _{N\to\fz}  \sup_{r\ge 1}
\sup_{B,\,r_B\ge Nr}\frac{ n\log [\sqrt n\log (r_B/r+2)]  }{ \log(r_B/r )}=0.
$$
So by Lemma \ref{lemma 2.1}, we have $ \overline\dim_{LG}\,(2^\nn)^n =0.$

Generally, we let $\theta\in(0,\,1)$.
For every ball $B=B(0,\,\sqrt n 2^{m+1})$ with $m\ge 2/\theta+1$, we have
 $$N_{\rm cov}(1,\, (2^{\nn_\theta})^n \cap B))\ge \sharp [(2^{\nn_\theta})^n \cap B]\ge  2^{n\theta m} $$
an hence,
$$\overline\dim_{LG} (2^{\nn_\theta})^n \ge \liminf _{N\to\fz}\sup_{2^{m+1}\ge N}
 \frac{    n\theta m }{      (m+1)  }=\liminf _{N\to\fz}
 \frac{    n\theta (\log N)-n\theta }{     \log N  } =n\theta.$$

The proof of  $\overline\dim_{LG} (2^{\nn_\theta})^n \le  \theta n$
 is reduced to verifying that
for every large $N$,  all $r>0$ and all balls $B$ with $r_B\ge Nr $, we have
\begin{equation}\label{e2.2}N_{\rm cov}(r,\, (2^{\nn_\theta})^n\cap B))
\ls ( r_B/r)^{\theta n }.\end{equation}
Indeed, this implies that
\begin{eqnarray*}
\overline \dim_{LG}  (2^{\nn_\theta})^n  &&\le\liminf_{N\to\fz  }\sup_{r>0}
\sup_{B,\,r_B\ge Nr}\frac{  \log  [(r_B/r )^{\theta n }]+\log C  }{ \log (r_B/r)  }\\
&&=\liminf_{N\to\fz  }\frac{ (\theta n )\log  N +  \theta n \log C }{  \log N}\\
&&=\theta n.
\end{eqnarray*}

To prove \eqref{e2.2}, we consider two cases under the assumption $N\ge 2^5$.

{\bf Case 1}:\ $0<r\le1$. If $r_B<2$, then $(2^{\nn_\theta})^n\cap B$ contains no more than  a  uniform number of points and hence
$$
\sharp ((2^{\nn_\theta})^n\cap B)\ls 1\ls(r_B/r)^{\theta n}.
 $$
If  $2^m< r_B\le 2^{m+1}$ for some $m> 1$, then $(2^{\nn_\theta})^n\cap B\subset [0,\,2^{m+2}]^n$.
 Notice that the interval $[0,\,2^{m+2}]$ contains at most $\sum_{k=1}^{m+1}2^{\theta k}\sim 2^{\theta m}$
 points of $2^{\nn_\theta}$, and so we have
 $$
\sharp ((2^{\nn_\theta})^n\cap B)\ls 2^{\theta mn}\ls (r_B/r)^{\theta n},
 $$
 which implies that
$$N_{\rm cov}(r,\, (2^{\nn_\theta})^n\cap B) \le
\sharp ((2^{\nn_\theta})^n\cap B)\ls (r_B/r)^{\theta n}.
 $$

{\bf Case 2}:\ $r>1$.  Assume that $2^\ell<r<2^{\ell+1}$.
Given a ball $B$ with $r_B\ge Nr $, assume that
   $2^m< r_B\le 2^{m+1}$ for some $m\ge 5+\ell$. Then $(2^{\nn_\theta})^n\cap B\subset [0,\,2^{m+2}]^n$.
Observe that  $[0,\,2^\ell]$ can be covered by an interval  of  length $r$.
If $\ell \le k\le [\ell/\theta] $, then $\{2^k,\,2^k+1,\,\cdots,\,2^k+2^{[\theta k]}\}$ can be covered by  an interval  of  length $r$.
If  $k>[\ell/\theta] $, then  $\{2^k,\,2^k+1,\,\cdots,\,2^k+2^{[\theta k]}\}$ can be covered by  $2^{[\theta k]-\ell}+1$
 intervals  of  length $r$.
Thus when  $m\le [\ell/\theta]-2$, $ 2^{\nn_\theta} \cap [0,\,2^{m+2}]$ can be covered by
$m-\ell+2$   intervals  of  length $r$.
If $m> [\ell/\theta]-2$, then $ 2^{\nn_\theta} \cap [0,\,2^{m+2}]$ can be covered by
$2^{\theta m-\ell}$  intervals  of  length $r$. In both cases,
$ 2^{\nn_\theta} \cap [0,\,2^{m+2}]$ can be covered by $C2^{\theta(m-\ell)}\le C (r_B/r)^{\theta }$   intervals  of  length $r$.
Therefore
$$N_{\rm cov}(r,\, (2^{\nn_\theta})^n\cap [0,\,2^{m+2}]^n) \ls (r_B/r)^{\theta n }
 $$
which gives \eqref{equality 2.1} as desired.
\end{proof}

\begin{rem}\rm
Lemma \ref{lemma 2.2} indicates that
the dimension $\overline \dim_{LG}$ not only measures the local self-similarity and local Minkowski
size   but also measures the global  selfsimilarity  of $E$.
\end{rem}

For  a slight modification of the standard Cantor construction, we obtain $E_a$
and its self-similar extension $\mathcal E_a$ so that
$\overline\dim_L$ and $\overline\dim_{LG}$ are the same and coincide  with
$\dim_M E_a $.
Precisely, the sets $E_a$ and $\mathcal E_a$ are defined as follows.
Let $ a  \in(0,\,1)$.
Let  $I_{i}$,  $i=1,2$,
be the two closed intervals obtained by removing the middle open interval of length $a$ from $I_0=[0,\,1]$
 ordered from left to right;
 when $m\ge2$,
the subintervals $I_{i_1\cdots i_m}$, $i_m=1,2$,
are the two closed intervals  obtained by removing
the middle open intervals   of length $a [(1-a)/2]^{m-1}$ from $I_{ i_1\cdots i_{m-1}}$ ordered from left to right.  Notice that $|I_{ i_1\cdots i_{m }}|=[(1-a)/2]^{m }$
for $m\ge 1$.
For each $m\ge1$, set
$$
I^m_a=\bigcup_{i_1,\,\cdots,i_m\in\{1,2\}}{I_{ i_1\cdots i_{m }}}\ \ \&
\ \ E^m_a= (I^m_a)^n= I^m_a\times\cdots\times I^m_a.
$$
Notice that $E^m_a$ consists of $2^{mn}$ disjoint cubes $\{Q_{m,j}\}_{j=1}^{2^{mn}}$ with edge length $[(1-a)/2]^m$, and $E^{m+1}_a\subset E^{m}_a$.
Denote by $z_{m,\,j}$ the center of $Q_{m,j}$
and  $z_0= ({\frac12},\,\cdots,\,\frac12)$  the center of $Q_0=I_0^n$.
Denote by $E_a$ the closure of  the collection of all these centers, that is,
\begin{equation}\label{e2.y2}
 E_a=\overline{\{z_0,\, z_{m,\,j}:\,   m\in\nn,\,j=1,\,\cdots,\,2^{mn} \}}.
 \end{equation}
 Set \begin{equation}\label{e2.y3}\mathcal E_a=\bigcup_{k\ge 0}\lf\{\lf(\frac2{1-a}\r)^k x:\ x\in E_a\r\}.  \end{equation}
In this case, we consider the larger family $\{\wz Q_{m,j}\}_{m\in\zz,\,j\in\nn}$ consisting of all    $$\lf\{\lf(\frac2{1-a}\r )^k x:\ x\in Q_{m+k,\,i} \r\}$$ for all possible  $k\ge-m$ and $ i=1,\,\cdots,2^{(m+k)n}$.
Let $\wz z_{m,\,j}$ be the center of $\wz Q_{m,j}$. We also have
\begin{equation}
 \mathcal E_a=\overline{\{ \wz z_{m,\,j}:\,   m\in\zz,\,j\in\nn \}}.
 \end{equation}

\begin{lem}\label{lemma 2.4}
For every $a\in(0,\,1)$,
$$\overline\dim_M E_a=\overline\dim_L E_a=\overline\dim_L \mathcal E_a=
\overline\dim_{LG} E_a=\overline\dim_{LG} \mathcal E_a=\frac n{\log [2/(1-a)]}.$$
\end{lem}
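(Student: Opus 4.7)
Set $\lambda:=2/(1-a)$, so that $\lambda>2$, and let $d$ denote the value $n\log 2/\log\lambda$ asserted in the statement. The plan is to establish $\overline\dim_M E_a=d$ directly, invoke Lemma~\ref{lemma 2.1} for the lower bounds on the remaining four dimensions, and reduce all the remaining upper bounds to the single estimate $\overline\dim_{LG}\mathcal E_a\le d$.

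For the direct Minkowski computation, $E_a^m$ is a disjoint union of $2^{mn}$ cubes $Q_{m,j}$ of side $\lambda^{-m}$, and $E_a$ equals the closure of these centers together with at most $\sum_{k<m}2^{kn}\lesssim 2^{mn}$ additional centers from levels below $m$. Thus $N_{\rm cov}(\lambda^{-m},E_a)\lesssim 2^{mn}$. For the matching lower bound, any two distinct level-$m$ centers are separated by at least $\frac{1+a}{1-a}\lambda^{-m}$ in some coordinate direction (one level-$m$ edge plus one level-$(m-1)$ gap), so there is a constant $c_n\in(0,1)$ for which each cube of edge $c_n\lambda^{-m}$ contains at most one level-$m$ center; hence $N_{\rm cov}(c_n\lambda^{-m},E_a)\ge 2^{mn}$. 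Sending $m\to\fz$ in both estimates gives $\overline\dim_M E_a=d$. Lemma~\ref{lemma 2.1}(ii) applied with a single ball $B$ containing $E_a$ then yields $d\le\overline\dim_L E_a\le\overline\dim_{LG}E_a$, and the inclusion $E_a\subset\mathcal E_a$ combined with Lemma~\ref{lemma 2.1}(iii) extends these lower bounds to $\mathcal E_a$.

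The main work is the upper bound $\overline\dim_{LG}\mathcal E_a\le d$; via Lemma~\ref{lemma 2.1}(ii)--(iii) and $E_a\subset\mathcal E_a$, this also yields matching upper bounds for $\overline\dim_L\mathcal E_a$, $\overline\dim_L E_a$, and $\overline\dim_{LG}E_a$. I exploit the extended self-similar cube family $\{\wz Q_{m,j}\}_{m\in\zz,\,j\in\nn}$ from the excerpt. Each $\wz Q_{m,j}$ has edge $\lambda^{-m}$, and a direct check (the parent center $\wz z_{m-1,\cdot}$ sits outside any child $\wz Q_{m,j}$ since $\lambda/2>1$) shows that $\mathcal E_a\cap\wz Q_{m,j}$ is a translate of $\lambda^{-m}E_a$; the first part of the proof then gives
$$N_{\rm cov}(r,\mathcal E_a\cap\wz Q_{m,j})\lesssim(\lambda^{-m}/r)^d\qquad\text{for all }0<r\le\lambda^{-m}.$$
Given any ball $B=B(x_B,r_B)$, pick $m\in\zz$ with $\lambda^{-m-1}<r_B\le\lambda^{-m}$. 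Assuming only a dimension-dependent bounded number of the $\wz Q_{m,j}$ meet $B$, one obtains $N_{\rm cov}(r,\mathcal E_a\cap B)\lesssim(r_B/r)^d$ and hence
$$\frac{\log N_{\rm cov}(r,\mathcal E_a\cap B)}{\log(r_B/r)}\le d+\frac{\log C}{\log N}\qquad\text{whenever }r_B\ge Nr.$$
Passing to $\liminf_{N\to\fz}$ in the definition of $\overline\dim_{LG}$ then yields $\overline\dim_{LG}\mathcal E_a\le d$.

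The main obstacle is the bounded-multiplicity claim: at fixed scale $\lambda^{-m}$ the family $\{\wz Q_{m,j}\}$ receives contributions from every parent level $k\ge-m$ of the self-similar extension and so is a priori infinite. However, only parent levels with $\lambda^k\gtrsim|x_B|+r_B$ produce cubes that reach $B$, and within each such level the relative volume density of scale-$\lambda^{-m}$ cubes equals $(2/\lambda)^{(m+k)n}$, which is summable since $\lambda>2$. Careful geometric bookkeeping, using additionally that the leftmost Cantor chain forces identification of corner cubes arising from consecutive parent levels (so that many ``$k$-th level'' near-corner contributions collapse to the same cube), reduces the effective count to a constant depending only on $n$. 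With this reduction in place, the remaining argument proceeds as sketched above.
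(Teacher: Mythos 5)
Your proposal follows essentially the same route as the paper: compute $\overline\dim_M E_a$ directly, reduce everything via Lemma~\ref{lemma 2.1}(ii)--(iii) to the single upper bound $\overline\dim_{LG}\mathcal E_a\le n/\log[2/(1-a)]$, and prove the latter through the covering estimate on the self-similar family $\{\wz Q_{m,j}\}$ together with a bounded-multiplicity claim for balls $B$ of radius comparable to the cube edge. The paper likewise states but does not spell out that bounded-multiplicity step, so you are not missing anything that the paper supplies. One remark on your sketch of that step: the ``relative volume density is summable'' reasoning is not really what makes it work (a density going to zero does not by itself rule out a ball landing on a cluster of cubes from many parent levels); the clean observation is that after removing the coincidences coming from the leftmost-chain identifications, the family $\{\wz Q_{m,j}\}_j$ consists of \emph{pairwise disjoint} cubes of edge $\lambda^{-m}$ --- two cubes $\lambda^{k_1}Q_{m+k_1,i_1}$ and $\lambda^{k_2}Q_{m+k_2,i_2}$ with $k_1<k_2$ either coincide (when $Q_{m+k_2,i_2}$ sits in the leftmost level-$(k_2-k_1)$ cube) or are disjoint --- and then a ball of radius $\lesssim\lambda^{-m}$ meets only $O_n(1)$ of them by a trivial volume count. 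With that replacement your argument closes cleanly and matches the paper's.
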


\begin{proof}
By Lemma \ref{lemma 2.1}, it suffices to show that
$$
\overline\dim_M E_a\ge \frac n{\log [2/(1-a)]}\ \ \&\ \
\overline\dim_L \mathcal E_a\le \frac n{\log [2/(1-a)]}.
$$
To this end,
notice that for each $k>m$, we have
$$2^{(k-m)n}<N_{\rm cov}([1-a)/2]^k,\,E_a\cap
\wz Q_{m,j} )\le 2^{(k-m)n}+\sum_{\ell=m}^{k-1}2^{\ell n}< 2^{(k+1-m)n},$$
where recall that $\ell(\wz Q_{m,j})=[(1-a)/2]^m$.
For each $r<[(1-a)/2]^{m+2}  $, picking
$k_r>m$ such that  $$ [ (1-a)/2]^{k_r}<r\le  [ (1-a)/2]^{k_r-1},$$
we have
$$N_{\rm cov}([1-a)/2]^{k_r+1},\,\mathcal E_a\cap\wz Q_{m,j})\le N_{\rm cov}(r,\,\mathcal E_a\cap\wz Q_{m,j})< N_{\rm cov}([1-a)/2]^{k_r},\,\mathcal E_a\cap\wz Q_{m,j}),$$
and hence,
$  N_{\rm cov}(r,\,\mathcal E_a )\sim   2^{(k_r-m)n}.$
In particular,  $ N_{\rm cov}(r,\,E_a )\gs    2^{k_rn},$
which
 implies that $$\overline\dim_M E_a\ge\limsup_{r\to0} \frac{ k_rn +\log C}{\log (1/r)}=
\limsup_{k_r\to\fz} \frac{ k_rn +\log C}{ k_r\log[2/(1-a)]+\log C_1}=\frac n{\log [2/(1-a)]}.$$
Moreover, for each ball $B$ with  $  r_B\ge [(1-a)/2]^{ 3}r$,
there exists a $k_B\le k_\ez-2$ such that
$$
[ (1-a)/2]^{k_B}<r_B\le [ (1-a)/2]^{k_B-1}.
$$
Hence
$$N_{\rm cov}(r,\,\mathcal E_a\cap
B)\le
N_{\rm cov}([1-a)/2]^{k_r},\,\mathcal E_a\cap
B)\ls 2^{(k_r-k_B)n}.$$
Thus
\begin{eqnarray*}
&&\sup_{B,\,r_B\ge [(1-a)/2]^{-N}r}\frac{\log N_{\rm cov}(r,\,\mathcal E_a\cap
B) }{ \log(r_B/r)}\\
&&\quad\le \sup_{0\le m\le k_r-N }
\frac{ \log  C_12^{(k_r-m)n}  }{ \log   [(1-a)/2]^{m- k_r}}\\
&&\quad\le \sup_{0\le m\le k_r-N}
\frac{ n( k_r-m) +\log C_1    }{ ( k_r-m )\log  [2/(1-a) ] }\\
&&\quad\le
\frac{ nN +\log C_1    }{  N\log  [2/(1-a) ] }\\
&&\quad\to n/\log [2/(1-a)]\ \ \hbox{as}\ \ N\to\fz.
\end{eqnarray*}
Consequently, we get
$$
\overline \dim_{LG} \mathcal E_a \le  \frac{n}{\log [2/(1-a)]},
$$
as desired.
\end{proof}

\section{A characterization of $Q$-spaces} \label {s3}

In this section, we   characterize membership in $Q$-spaces via oscillations.
To do so, let us introduce a couple of concepts. Let $u$ be a measurable function. For $\az\in(0,\,1)$, $q\in(0,\,\fz)$,  and each ball $B=B(x_0,\,r)\subset\mathbb R^n$, set
$$
\Psi_{\az,\,q}(u,\,B)=\sum_{k\ge0} 2^{ 2k\az }\bint_{B(x_0,\,r)}
 \inf_{c\in\rr}\lf\{\bint_{  B(x ,\,2^{-k}r )}
|u(z)-c|^q\,dz\r\}^{2/q}\,dx.
$$
Define the space $Q_{\az,\,q}(\rn)$
as the collection of $u\in L^q_\loc(\rn)$ such that
$$\|u\| _{Q_{\az,\,q}(\rn)} =\sup_{x_0\in\rn, \,r>0}\big[\Psi_{\az,\,q}(u,\,B(x_0,\,r))\big]^{1/2} < \infty.$$
Also, for every ball $B\subset\rn$ and each function $u$ on $B$, set
$$
\Phi_\az (u,\,B)=|B|^{ 2\az/n-1}\int_B\int_B \frac{|u(x)-u(y)|^{2}}{|x-y|^{n+2\az}}\,dx\,dy.
$$
Then
$
\|u\| _{Q_{\az}(\rn)}=\sup_B\big[\Phi_\az (u,\,B)\big]^{1/2},
$
where the supremum is taken over all balls $B\subset\mathbb R^n$.

\begin{prop}\label{p3.1} Let $\az\in(0,\,1)$ and $q\in(0,\,2]$.
 There exists a constant $C$ such that for all measurable functions $u$ and all balls $B=B(x_0,\,r)$ one has
$$
 C^{-1}\Phi_\az (u,\,B(x_0,\,r/16))\le  \Psi_{\az,\,q} (u,\,B(x_0,\, r))\le C \Phi_\az (u,\,B(x_0,\,16r)).
$$
Consequently,
$$
Q_\az(\rn)=Q_{\az,\,q}(\rn)\quad\hbox{with}\quad\|\cdot\|_{Q_\az(\rn)}\sim \|\cdot\|_{Q_{\az,q}(\rn)}.
$$
\end{prop}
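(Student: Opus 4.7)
The proposition amounts to proving the two-sided comparison; the norm equivalence $Q_\az(\rn)=Q_{\az,q}(\rn)$ then follows by taking suprema over balls. I sketch the two directions separately.

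\emph{Upper direction.} To obtain $\Psi_{\az,q}(u,B(x_0,r))\le C\Phi_\az(u,B(x_0,16r))$, I test the infimum with $c=u(x)$ (valid a.e.\ $x$ in the outer integration over $B(x_0,r)$) and invoke Jensen with $2/q\ge 1$:
$$\inf_{c\in\rr}\Big\{\bint_{B(x,2^{-k}r)}|u-c|^q\Big\}^{2/q}\le\bint_{B(x,2^{-k}r)}|u(z)-u(x)|^2\,dz.$$
Writing $z=x+h$, swapping summation in $k$ with integration in $(x,h)$, and splitting $h$ into dyadic annuli $|h|\sim 2^{-j}r$, the inner geometric sum $\sum_{k\le j}2^{2k\az}/(2^{-k}r)^n$ evaluates to $\sim r^{2\az}|h|^{-n-2\az}$, which collapses the double sum to $\Psi_{\az,q}(u,B(x_0,r))\le C\Phi_\az(u,B(x_0,2r))$. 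Passing from $2r$ to $16r$ loses only a dimensional constant, since enlarging the ball by a bounded factor affects the prefactor $|B|^{2\az/n-1}$ by at most a constant depending on $n$ and $\az$ (because $2\az<n$).

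\emph{Lower direction, case $q=2$.} Decompose $\Phi_\az(u,B(x_0,r/16))$ into the dyadic shells $S_k=\{(x,y)\in B(x_0,r/16)^2:\,2^{-k-1}r<|x-y|\le 2^{-k}r\}$ for $k\ge 4$; the restriction $k\ge 4$ ensures $B(x,2^{-k+1}r)\subset B(x_0,r)$ for every $x\in B(x_0,r/16)$. On $S_k$ we have $|x-y|^{-(n+2\az)}\sim 2^{k(n+2\az)}/r^{n+2\az}$, and combining $|u(x)-u(y)|^2\le 2|u(x)-u_B|^2+2|u(y)-u_B|^2$ (with $B=B(x,2^{-k+1}r)$) with the identity $\bint_B(u-u_B)^2=\omega_{k-1}(x)$ (at $q=2$) yields
$$\Phi_\az(u,B(x_0,r/16))\le C\sum_{k\ge 4}2^{2k\az}\bint_{B(x_0,r)}\Big[|u(x)-u_{B(x,2^{-k+1}r)}|^2+\omega_{k-1}(x)\Big]dx.$$
The $\omega_{k-1}$-sum is, up to a shift, $\Psi_{\az,2}(u,B(x_0,r))$. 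For the pointwise defect term, I telescope $u(x)-u_{B_{k-1}(x)}=\sum_{j\ge k-1}(u_{B_{j+1}(x)}-u_{B_j(x)})$ a.e.\ and invoke the discrete weighted Hardy inequality with weights $w_k=2^{2k\az}$: the Muckenhoupt-type condition $(\sum_{l\le k}w_l)(\sum_{l\ge k}w_l^{-1})\le C$ is immediate from geometric summation, giving $\sum_k 2^{2k\az}(u(x)-u_{B_{k-1}(x)})^2\le C\sum_k 2^{2k\az}|u_{B_{k+1}(x)}-u_{B_k(x)}|^2$. The jump estimate $|u_{B_{k+1}(x)}-u_{B_k(x)}|^2\le C\omega_k(x)$ (Jensen together with $B_{k+1}\subset B_k$ of comparable volumes) then closes the bound to $\Psi_{\az,2}(u,B(x_0,r))$, so $\Phi_\az(u,B(x_0,r/16))\le C\Psi_{\az,2}(u,B(x_0,r))$.

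\emph{From $q=2$ to $q\in(0,2]$ and the main obstacle.} Jensen yields $\Psi_{\az,q}\le\Psi_{\az,2}$ directly, which combined with the $q=2$ upper bound provides the upper estimate for all $q\in(0,2]$. For the matching lower estimate $\Phi_\az(u,B(x_0,r/16))\le C\Psi_{\az,q}(u,B(x_0,r))$ when $q<2$ one needs the converse self-improvement $\Psi_{\az,2}\le C(q,n)\Psi_{\az,q}$. Uniform Carleson boundedness of $\Psi_{\az,q}(u,B)$ places $u$ in $BMO$ (reading off the $k=0$ terms as averaged $L^q$-Campanato oscillations, with a Campanato--Meyers-type argument when $q<1$), and John--Nirenberg's exponential integrability then upgrades $L^q$-Campanato oscillations to $L^2$-Campanato oscillations with a constant depending only on $q$ and $n$. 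The main technical obstacle is executing this self-improvement cleanly for $q<1$: Jensen and the triangle inequality for the $L^q$-infimizer of $\bint|u-c|^q$ have to be replaced by the $q$-subadditive bound $|a+b|^q\le|a|^q+|b|^q$ with careful tracking of constants, and John--Nirenberg's distribution estimate must be applied to the $L^q$-quasinorm rather than the $L^1$-norm. All other steps --- Fubini exchanges, dyadic rearrangements, weighted Hardy, and Poincaré-type identities on balls --- are standard.
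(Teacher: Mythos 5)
Your proposal is correct in its broad strokes for the case $q\in[1,2]$, with an approach that genuinely differs from the paper in one direction, but it leaves a real gap for $q\in(0,1)$ (and more generally in the passage from $q=2$ to $q<2$) that the paper closes with a specific tool you do not have.

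\textbf{Upper direction $\Psi\lesssim\Phi$.} Your argument — testing the infimum with $c=u(x)$, applying Jensen with exponent $2/q\ge1$, swapping the sum over $k$ with the integration in $(x,h)$, and summing the geometric series $\sum_{k\le j}2^{k(n+2\alpha)}/r^n\sim r^{2\alpha}/|h|^{n+2\alpha}$ — is correct and is in fact slightly cleaner than what the paper does. The paper instead compares the ball oscillation to the oscillation over a surrounding annulus $B(x,2^{-k+2}r)\setminus B(x,2^{-k+1}r)$ and then uses that $|z-w|\sim 2^{-k}r$ there; your direct substitution $c=u(x)$ avoids introducing the annulus. Both yield $\Psi_{\alpha,q}(u,B(x_0,r))\lesssim\Phi_\alpha(u,B(x_0,16r))$.

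\textbf{Lower direction $\Phi\lesssim\Psi$, case $q=2$.} Your telescoping of $u(x)-u_{B_{k-1}(x)}$ along Lebesgue differentiation, together with the discrete weighted Hardy inequality for the weights $2^{2k\alpha}$ and the jump bound $|u_{B_{j+1}(x)}-u_{B_j(x)}|^2\lesssim\omega_j(x)$, is a correct and self-contained alternative to the paper's route. The paper instead invokes a pointwise lemma from \cite{kyz} (Lemma \ref{lemma 3.2} of the paper), which gives $|u(x)-u(y)|\lesssim\sum_{j\ge k-2}\bigl\{\inf_c(\bint_{B(x,2^{-j})}|u-c|^\sigma)^{1/\sigma}+\text{(same at $y$)}\bigr\}$ for $|x-y|\sim 2^{-k}$ \emph{and any} $\sigma>0$, then runs H\"older and a change of summation order. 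For $q=2$ (and in fact $q\in[1,2]$) your approach and theirs both work; yours avoids importing an external lemma.

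\textbf{The gap: $q\in(0,1)$.} This is where your proposal breaks down, and I do not think the fix is merely ``careful tracking of constants.'' Two concrete obstructions:

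(1) Your telescoping uses $L^1$ averages $u_{B_j(x)}$, and the key jump estimate $|u_{B_{j+1}}-u_{B_j}|\lesssim\bint_{B_j}|u-u_{B_j}|$ produces an $L^1$ oscillation, which for $q<1$ cannot be dominated by $\inf_c(\bint_{B_j}|u-c|^q)^{1/q}$ by Jensen — the inequality goes the wrong way. One would need to replace the $L^1$ averages by $L^q$-minimizers (medians) and prove Lebesgue-type convergence of those minimizers to $u(x)$, which is essentially what Lemma \ref{lemma 3.2} from \cite{kyz} packages.

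(2) Your alternative route via John--Nirenberg cannot yield the \emph{local} two-sided estimate that the proposition asserts. Knowing $\Psi_{\alpha,q}(u,B)\le M$ gives only an $L^2(dx)$-averaged control of $L^q$-oscillations at the various scales inside $B$; John--Nirenberg requires a \emph{uniform} oscillation bound over all sub-balls as input, which is a strictly stronger hypothesis than the weighted-sum quantity $\Psi_{\alpha,q}(u,B)$. At best this argument would give an implication at the level of global norms (``$Q_{\alpha,q}$ bounded $\Rightarrow$ $Q_{\alpha,2}$ bounded''), and even that requires care, whereas the proposition is a ball-by-ball comparison with explicit dilation factors.

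The upshot: what is missing from your proposal is exactly the pointwise oscillation estimate of Lemma \ref{lemma 3.2}, which is stated for all $\sigma>0$ and is what lets the paper treat $q\in(0,2]$ uniformly. With that lemma in hand, your upper direction and the paper's lower direction combine cleanly; without it, your proof covers only $q\in[1,2]$.
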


To verify Proposition \ref{p3.1}, we need the following estimate from \cite{kyz}.

\begin{lem}\label{lemma 3.2}
Let $\sz\in(0,\,\fz)$ and $u\in L^\sz_\loc(\rn)$. Then there is a set $E$ with $|E|=0$ such that for each pair of points
$x,\,y\in \rn\setminus E$  with $|x-y|\in[2^{-k-1}, 2^{-k})$
one has
\begin{eqnarray} \label{equation 3.1}
&&|u(x)-u(y)| \\
&&\quad\lesssim\sum_{j\ge k-2} \lf\{\inf_{c\in\rr}\lf[ \bint_{B(x,\,2^{-j})}|u(w)-c|^\sz \,dw\r]^{1/\sz }
+\inf_{c\in\rr}\lf[ \bint_{B(y,\,2^{-j})}|u(w)-c|^\sz \,dw\r]^{1/\sz }\r\}.\nonumber
\end{eqnarray}
\end{lem}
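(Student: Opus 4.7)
The plan is a standard telescoping argument through near-minimizing constants at dyadic scales, carried out separately at $x$ and at $y$ and bridged across the two points at the common scale $2^{-(k-2)}$. Let $E\subset\rn$ be the complement of the set of $\sz$-Lebesgue points of $u$, i.e., the points at which $\lim_{r\to 0}\bint_{B(x,r)}|u(w)-u(x)|^\sz\,dw=0$ fails; applying the standard Lebesgue differentiation theorem to the functions $w\mapsto |u(w)-q|^\sz\in L^1_\loc(\rn)$ as $q$ runs over a countable dense subset of $\rr$ shows $|E|=0$ in the usual way.

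Fix $x\in\rn\setminus E$ and for each $j\in\zz$ set
\[
A_j(x):=\inf_{c\in\rr}\lf(\bint_{B(x,2^{-j})}|u-c|^\sz\,dw\r)^{1/\sz},
\]
and choose $c_j(x)\in\rr$ with $\bint_{B(x,2^{-j})}|u-c_j(x)|^\sz\,dw\le 2^\sz A_j(x)^\sz$. Using the quasi-triangle inequality $|a-b|^\sz\le C_\sz(|a|^\sz+|b|^\sz)$ and the fact that $|c_j(x)-u(x)|$ is constant in the integration variable,
\[
|c_j(x)-u(x)|^\sz=\bint_{B(x,2^{-j})}|c_j(x)-u(x)|^\sz\,dw\ls A_j(x)^\sz+\bint_{B(x,2^{-j})}|u(w)-u(x)|^\sz\,dw,
\]
and both terms tend to $0$ as $j\to\fz$ since $x$ is a $\sz$-Lebesgue point (the first via the admissible choice $c=u(x)$ in the infimum defining $A_j(x)$). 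Hence $c_j(x)\to u(x)$, and similarly $c_j(y)\to u(y)$.

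Running the same estimate on $B(x,2^{-j-1})\subset B(x,2^{-j})$, with the volume ratio contributing only a harmless factor $2^n$, gives $|c_j(x)-c_{j+1}(x)|\ls A_j(x)+A_{j+1}(x)$; telescoping then yields $|u(x)-c_{k-2}(x)|\ls\sum_{j\ge k-2}A_j(x)$, and the analogous bound at $y$. Since $|x-y|<2^{-k}$, the ball $B(x,2^{-k})$ sits inside both $B(x,2^{-(k-2)})$ and $B(y,2^{-(k-2)})$ with volumes comparable, so averaging the constant quantity $|c_{k-2}(x)-c_{k-2}(y)|^\sz$ over $B(x,2^{-k})$ and splitting through an auxiliary $u(z)$ produces $|c_{k-2}(x)-c_{k-2}(y)|\ls A_{k-2}(x)+A_{k-2}(y)$. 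Adding the three pieces via the triangle inequality proves \eqref{equation 3.1}.

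The only mildly delicate point is the case $\sz<1$, where one cannot apply $L^1$-Lebesgue differentiation directly to $u$ and must instead work through $|u|^\sz\in L^1_\loc(\rn)$ as above; the $\sz$-dependent constants arising from $|a-b|^\sz\le C_\sz(|a|^\sz+|b|^\sz)$ and $(a^\sz+b^\sz)^{1/\sz}\le 2^{1/\sz}(a+b)$ remain finite and are harmlessly absorbed into the implicit constant in $\ls$, so no genuine obstruction arises.
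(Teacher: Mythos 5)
Your proof is correct and is the standard telescoping argument through near-optimal constants at dyadic scales; the paper itself does not prove Lemma~3.2 but cites it from \cite{kyz}, and your argument (Lebesgue-point null set via $w\mapsto|u(w)-q|^\sz$ over a countable dense set of $q$, chain $|c_j-c_{j+1}|\ls A_j+A_{j+1}$, and the bridge $|c_{k-2}(x)-c_{k-2}(y)|\ls A_{k-2}(x)+A_{k-2}(y)$ using $B(x,2^{-k})\subset B(x,2^{-(k-2)})\cap B(y,2^{-(k-2)})$) is precisely the proof there. The $\sz<1$ case is handled correctly: the quasi-triangle inequality is used only inside averages at a single scale, while the summation and final estimate occur for real numbers, where the ordinary triangle inequality applies.
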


\begin{proof}[Proof of Proposition \ref{p3.1}] By Lemma \ref{lemma 3.2}, we obtain
\begin{eqnarray*}
&&\int_{B(x ,\,2r)}\frac{|u(x)-u(y)|^{2}}{|x-y|^{n+2\az}}\,dy\\
&&\quad\le
 \sum_{j=-1}^\fz (2^{-j}r)^{-(n+2\az)}\int_{B(x ,\,2^{-j}r)\setminus B(x ,\,2^{-j-1}r)}|u(x)-u(y)|^{2}\,dy\\
&&\quad\ls  \sum_{j=-1}^\fz (2^{-j}r)^{-2\az }\lf[\sum_{k\ge j-2}  \inf_{c\in\rr} \bint_{B(x,\,2^{-k}r)}|u(w)-c|^q \,dw\r]^{2/q }\\
  &&\quad\quad+  \sum_{j=-1}^\fz (2^{-j}r)^{-2\az} \bint_{B(x ,\,2^{-j}r)\setminus B(x ,\,2^{-j-1}r)}\lf[ \sum_{k\ge j-2}\inf_{c\in\rr}\bint_{B(y,\,2^{-k})}|u(w)-c|^q \,dw\r]^{2/q }\,dy \\
&&\quad=J_1(x)+J_2(x).
\end{eqnarray*}
Applying H\"older's inequality and changing the order of summation, we obtain
\begin{eqnarray*}
J_1(x)&&\ls
\sum_{j=-1}^\fz (2^{-j}r)^{-2\az }2^{-j\az}\sum_{k\ge j-2} 2^{k\az } \inf_{c\in\rr}\lf[ \bint_{B(x,\,2^{-k}r)}|u(w)-c|^q \,dw\r]^{2/q }\\
&&\ls\sum_{k\ge -3} 2^{k\az }
\sum_{j=-1}^{k} (2^{-j}r)^{-2\az } 2^{-j\az }\inf_{c\in\rr}\lf[ \bint_{B(x,\,2^{-k}r)}|u(w)-c|^q \,dw\r]^{2/q }\\
&&\ls\sum_{k\ge-3}(2^{-k}r)^{-2\az}
\inf_{c\in\rr}\lf[ \bint_{B(x,\,2^{-k}r)}|u(w)-c|^q \,dw\r]^{2/q }.
\end{eqnarray*}
Thus,
\begin{eqnarray*}
 r^{2\az-n}\int_{B(x_0,\,r)} J_1(x)\,dx
&&\ls  \sum_{k\ge-3} 2^{ 2k\az} \bint_{B(x_0 ,\,8r)}
 \inf_{c\in\rr}\lf[ \bint_{B(x,\,2^{-k}r)}|u(w)-c|^q \,dw\r]^{2/q }\,dx\\
&&\ls \Psi_{\az,\,q}(u,\,B(x_0,\,8r)).
 \end{eqnarray*}
For $J_2$, notice that
$$
\int_{B(x_0,\,r)}J_2(x)\,dx\ls \int_{B(x_0,\,4r)}\sum_{j=-1}^\fz (2^{-j}r)^{-2\az} \lf[ \sum_{k\ge j-2}\inf_{c\in\rr}\bint_{B(y,\,2^{-k})}|u(w)-c|^q \,dw\r]^{2/q }\,dy.
$$
Then, applying an argument similar  to the above estimate for $J_1$,  we have
\begin{eqnarray*}
 r^{2\az-n}\int_{B(x_0,\,r)} J_2(x)\,dx\ls \Psi_{\az,\,q}(u,\,B(x_0,\,8r)).
 \end{eqnarray*}
Combining  the estimates on $J_1$ and $J_2$, we obtain
$$
\Phi_{\az }(u,\,B(x_0,\,r))\ls \Psi_{\az,\,q}(u,\,B(x_0,\,8r)).
$$

On the other hand, noticing that for all $x\in\rn$, $r>0$ and $k\ge 0$ one has
$$
2^{-k }r\le |x-w|-|x-z|\le|z-w|\le |x-w|+|x-z|\le 2^{-k+3}r
$$
whenever
$$
z\in B(x,\,2^{-k}r)\ \ \&\ \ w\in B(x,\,2^{-k+2}r)\setminus B(x,\,2^{-k+1}r),
$$
we utilize $q\in (0,2]$ and the H\"older inequality to achieve
\begin{eqnarray*}
&& \inf_{c\in\rr}\lf[ \bint_{B(x,\,2^{-k}r)}|u(w)-c|^q \,dw\r]^{2/q}\\
&&\quad\ls\bint_{B(x,\,2^{-k}r)}|u(z)-u_{B(x,\,2^{-k}r)}|^2\,dz\\
&&\quad\ls \bint_{B(x,\,2^{-k}r)}|u(z)-u_{B(x,\,2^{-k+2}r)\setminus B(x,\,2^{-k+1}r)}|^2\,dz\\
&&\quad\ls \bint_{B(x,\,2^{-k}r)}\bint_{B(x,\,2^{-k+2}r)\setminus B(x,\,2^{-k+1}r)} |u(z)-u(w)|^2 \,dw\,dz\\
&&\quad\ls (2^{-k}r)^{2\az}\bint_{B(x,\,2^{-k}r)}\int_{B(x,\,2^{-k+2}r)\setminus B(x,\,2^{-k+1}r)}\frac{|u(z)-u(w)|^2}{|z-w|^{n+2\az}}\,dw\,dz\\
&&\quad\ls (2^{-k}r)^{2\az}\bint_{B(x,\,2^{-k}r)}\int_{B(z,\,2^{-k+3}r)\setminus B(z,\,2^{-k}r)}\frac{|u(z)-u(w)|^2}{|z-w|^{n+2\az}}\,dw\,dz.
\end{eqnarray*}
Thus, by changing the order of the integrals with respect to $dz$ and $dx$,
\begin{eqnarray*}
&&\Psi_{\az,\,q}(u,\,B(x_0,\,r)) \\
&&\quad\ls r^{2\az}\sum_{k\ge0}\bint_{B(x_0,\,r)} \bint_{B(x,\,2^{-k}r)}\int_{B(z,\,2^{-k+3}r)\setminus B(z,\,2^{-k}r)}\frac{|u(z)-u(w)|^2}{|z-w|^{n+2\az}}\,dw\,dz\,dx\\
&&\quad\ls r^{2\az}\sum_{k\ge0}\bint_{B(x_0,\,2r)} \bint_{B(z,\,2^{-k}r)}\int_{B(z,\,2^{-k+3}r)\setminus B(z,\,2^{-k}r)}\frac{|u(z)-u(w)|^2}{|z-w|^{n+2\az}}\,dw\,dx\,dz\\
&&\quad\ls r^{2\az}\sum_{k\ge0}  \bint_{B(x_0,\,2r)}\int_{B(z,\,2^{-k+3}r)\setminus B(z,\,2^{-k}r)}\frac{|u(z)-u(w)|^2}{|z-w|^{n+2\az}}\,dw \,dz\\
&&\quad\ls r^{2\az-n} \int_{B(x_0,\,2r)}\int_{B(z,\,8r)}\frac{|u(z)-u(w)|^2}{|z-w|^{n+2\az}}\,dw\,dz \\
&&\quad\ls \Phi_{\az}(u,\,B(x_0,\,16r)).
\end{eqnarray*}
This completes the proof of Proposition \ref{p3.1}.
\end{proof}

\section {Proof of Theorem \ref{t1.2}} \label{s4}

Here we only prove Theorem \ref{t1.2} under the assumption $\diam E<\fz$. The case $\diam E=\fz$ is similar.
Without loss of generality, we may assume that $\diam E=1$ and $E\subset B(0,\,1)$.
By Proposition \ref{p3.1}, it suffices to show that
$$
\Psi_{\az,\,2}(u\circ f,\,B)\ls\|u\|_{Q_\az(\rn)} \quad\hbox{for each ball}\ \ B=B(x_0,\,r).
$$
We divide the argument into two cases.

{\bf Case 1}:\ $d(x_0,\,E)\ge  4 r$.
Notice that $B(x ,2r)\cap E = \emptyset$ for all $x\in B(x_0,\,r)$. By $J_f\in A_1(\rn,\,E)$,
and $J_f(f^{-1}(z))J_{f^{-1}}(z)=1$ for almost all $z\in\rn$, for all $k\ge0$ and $x\in B(x_0,\,r)$,
we have
\begin{eqnarray}\label{e4.w1}\esup_{z\in f(B(x,\,2^{-k}r))}J_{f^{-1}}(z)&&=
  \esup_{z\in f( B(x,\,2^{-k}r)) }[J_{f}( f^{-1}(z) )]^{-1} \\
&&= \lf[ \einf _{w\in  B(x,\,2^{-k}r) } J_{f}( w )\r]^{-1}\nonumber\\
&&\ls  \frac{| B(x ,\,2^{-k}r ) |}{|f(B(x ,\,2^{-k}r ))|}.\nonumber
\end{eqnarray}
Thus,
\begin{eqnarray*}
&&  \bint_{  B(x ,\,2^{-k}r )}
|u\circ f(z)-c|^2\,dz\\
&&\quad=\frac{|f(B(x ,\,2^{-k}r ))|}{| B(x ,\,2^{-k}r ) |}
\bint_{f(B(x ,\,2^{-k}r ))}|u(z)-c|^2J_{f^{-1}}(z)\,dz\\
&&\quad \ls\bint_{f(B(x ,\,2^{-k}r ))}|u(z)-c|^2 dz.
\end{eqnarray*}
Hence we have
\begin{eqnarray*}
&& \Psi_{\az,\,2}(u\circ f,\,B(x_0,\,r))\\ 
&&\quad\ls \sum_{k\ge0} 2^{ 2k\az  } \bint_{ B(x_0,\,r) }\inf_{c\in\rr}
 \bint_{ f( B(x ,\,2^{-k}r ))} |u (z)-c|^2\,dz \,dx\\
 &&\quad\ls \sum_{k\ge0}   \int_{ B(x_0,\,r) }   \frac{{ | B(x ,\, r  )| ^{ 2\az/n-1}}}{|B(x,\,2^{-k}r)|^{2\az/n}}\inf_{c\in\rr}
 \bint_{ f( B(x ,\,2^{-k}r ))} |u (z)-c|^2\,dz \,dx.
 \end{eqnarray*}
Observe that $J_f\in A_1(\rn,\,E)$  also implies that
$$\frac{|f(B(x ,\,2^{-k}r))|}{|B(x,\,2^{-k}r)|}=\bint_{B(x,\,2^{-k}r)}J_f(z)\,dz\ls\einf_{z\in B(x,\,2^{-k}r)} J_f(z)\ls J_f(x)\quad \mbox{for almost all }\, x\in B,$$
that is,
$$|B(x,\,2^{-k}r)|^{-1}\ls J_f(x) |f(B(x ,\,2^{-k}r))|^{-1}.$$
Therefore, by this and a change of the variables again,
\begin{eqnarray*}
&& \Psi_{\az,\,2}(u\circ f,\,B(x_0,\,r))\\
&&\quad\ls \sum_{k\ge0}  \int_{B(x_0,\,r)}  \frac{ | B(x ,\, r  )| ^{ 2\az/n-1}}{|f(B(x,\,2^{-k}r))|^{2\az/n}} \inf_{c\in\rr}\bint_{ f( B(x ,\,2^{-k}r ))} |u (z)-c|^2\,dz\, [J_f(x)]^{2\az/n}\,dx\\
&&\quad\ls \sum_{k\ge0}  \int_{f(B(x_0,\,r))}   \frac{ | B(f^{-1}(x) ,\, r  )| ^{ 2\az/n-1}}{|f(B(f^{-1}(x),\,2^{-k}r))|^{2\az/n}} \\
&&\quad\quad\times \inf_{c\in\rr}\bint_{ f( B(f^{-1}(x) ,\,2^{-k}r ))} |u (z)-c|^2\,dz\, J_{f^{-1}}(x)[J_f(f^{-1}(x))]^{2\az/n}\,dx\\
&&\quad\ls \sum_{k\ge0}   \int_{f(B(x_0,\,r))}  \frac{ | B(f^{-1}(x) ,\, r  )| ^{ 2\az/n-1}}{|f(B(f^{-1}(x),\,2^{-k}r))|^{2\az/n}}\inf_{c\in\rr}\bint_{ f( B(f^{-1}(x) ,\,2^{-k}r ))} |u (z)-c|^2\,dz\, [J_{f^{-1}}(x)]^{1-2\az/n}\,dx.
\end{eqnarray*}
Now, by \eqref{e4.w1} with $k=0$ and $x=x_0$, we have
\begin{eqnarray*}\esup_{x\in f(B(x_0,\, r))}J_{f^{-1}}(x)
&&\ls  \frac{| B(x_0,\, r ) |}{|f(B(x_0,\, r ))|}\sim  \frac{| B(w,\, r ) |}{|f(B(w,\, r ))|}\quad \forall\ w\in B(x_0,\,r),
\end{eqnarray*}
which further yields   that
\begin{eqnarray}\label{e4.xx1}
&& \Psi_{\az,\,2}(u\circ f,\,B(x_0,\,r))\\
&&\quad\ls \sum_{k\ge0}   \bint_{f(B(x_0,\,r))}  \lf(\frac{|f(B(f^{-1}(x),\, r))|}{|f(B(f^{-1}(x),\,2^{-k}r))|}\r)^{2\az/n}
\inf_{c\in\rr}\bint_{ f( B(f^{-1}(x) ,\,2^{-k}r ))} |u (z)-c|^2\,dz\,  \,dx\nonumber\\
&&\quad\ls \bint_{f(B(x_0,\,r))}\sum_{k\ge0} \left(\frac {L_f (f^{-1}(x),r)} {L_f(f^{-1}(x),\,2^{-k}r)}\right)^{2\az}
 \inf_{c\in\rr}\bint_{ f( B(f^{-1}(x) ,\,2^{-k}r ))}|u (z)-c|^2\,dz \,dx,\nonumber
\end{eqnarray}
where
$$
L_f(z,r)=\sup \{|f(z)-f(w)|:|z-w|\le r\}\ \ \&\ \ L_f(z,\,r)^n\sim |f(B(z,\,r))|.
$$
Moreover, by quasisymmetry of $f$,  for all $j\in\zz$ and $z\in\rn$, we have
\begin{equation}\label{e4.1}
 \sharp\lf\{k\in\zz:\  L_f\lf(z,\,2^{-k}r\r)\in[2^{-j-1}L_f(z,\, r),\,2^{-j}L_f(z,\, r)) \r\}\ls 1.
\end{equation}

Recalling that
$$
f(B(x_0,\,r))\subset B(f(x_0),\,L_f(x_0,\,r))\ \ \&\ \ L_f(f^{-1}(x),\,r)\le 2^{N_2}L_f(x_0,\,r)
$$
holds for some constant $N_2\ge 1$ (independent of $x_0,\,r$; see \cite{k11}), we arrive at
\begin{eqnarray}\label{e4.2}
&& \Psi_{\az,\,2}(u\circ f,\,B(x_0,\,r))\\
&&\quad\ls \sum_{j\ge0} 2^{2j\az} \bint_{B(f(x_0),\,L_f(x_0,\, r))}
 \bint_{ B(x ,\,2^{-j}2^{N_2}L_f(x_0,\,r))}|u(z)-c|^2\,dz \,dx\nonumber\\
&&\quad\ls \Psi_{\az,\,2}(u ,\,B(f(x_0),\,2^{N_2}L_f(x_0,\, r))), \nonumber
\end{eqnarray}
which together with Proposition \ref{p3.1} gives
$$\Psi_{\az,\,2}(u\circ f,\,B(x_0,\,r))\ls\|u\|_{Q_\az(\rn)}^2 $$
as desired.

{\bf Case 2}:\ $ d(x_0,\,E)<4r\le 4$.
Recall that each domain $\boz$ admits a Whitney decomposition. In particular, for   $\boz=\rn\setminus E$, there exists
 a collection $W_{\boz}=\{S_j\}_{j\in\nn}$ of countably many dyadic (closed) cubes such that

 (i) $\boz=\cup_{j\in\nn}S_j$ and $ (S_k)^\circ\cap (S_j)^\circ=\emptyset$ for all $j,\,k\in\nn$  with $j\ne k$;

 (ii) $2^7\sqrt n\ell(S_j)\le \dist(S_j,\,\partial\boz)\le 2^9\sqrt n \ell(S_j)$;

 (iii) $\frac14\ell(S_k)\le  \ell(S_j) \le 4\ell(S_k)$ whenever  $S_k\cap S_j\ne\emptyset$.

Assume that $2^{-k_0-1}\le 16r<2^{-k_0}$ for $k\in\nn$.
For each $k\in\zz$, write
$$
\mathscr S_k(16B)=\{S_j\in W_{\boz}:\ S_j\cap  16B \ne \emptyset,\,2^{-k}\le \ell(S_j)<2^{-k+1}\}\equiv\{S_{k,i}\}_i.
$$
Notice that there exists a integer $N_0$ such that if $k\le k_0-N_0$, then $\mathscr S_k(16B)=\emptyset$. Indeed,
since
$$
\dist(S_{k,j},\,E)\le 16r+\dist(x_0,\,E)\le 20r,
$$
by (ii) above, we have   $2^{-k}\ls 2^{-k_0}$ which is as desired.
Moreover, letting $\ez\in(0,\,n-\dim_L E-2\az)$, we claim that
for all $k\ge k_0-N$,
$$\sharp\mathscr S_k(16B) \ls 2^{(k-k_0)(\overline \dim_LE+\ez)}.$$
To see this, by the definition of $\overline\dim_L E $
there exists  constants $N_1\ge8$ and $k_1\in\nn$ such that for all $k\ge k_1+k_0+N_1$, we have
 $$ \frac{\log N_{\rm cov}(2^{-k},\,E\cap 32B)}{ \log( 32r /2^{-k} )}\le \overline\dim_L E+\ez ,$$
which implies that
\begin{equation}\label{e4.3}
 N_{\rm cov}( 2^{-k}, E\cap 32B)\ls 2^{(k-k_0)(\overline\dim_LE+\ez)}.
\end{equation}
For every $\dz>0$, denote by $\mathscr N_{\rm cov}(\dz,\,E\cap 32B)$ the collection of cubes of edge length $\dz$
 required to cover $E\cap 32B$ and
 $$
 \sharp \mathscr N_{\rm cov}(\dz,\,E\cap 32B)=N_{\rm cov}(\dz,\,E\cap 32B).
 $$
For $k\ge -N_0$ and $S_{k,i}\in \mathscr S_k(16B)$, we have
$2^{11}\sqrt n S_{k,i}\cap E\ne\emptyset$ and hence $S_{k,i}$ intersects  some cube
$Q\in \mathscr N_{\rm cov}( 2^{-k},\,E\cap 32B)$, which implies that $S_{k,i}\subset 2^{13} nQ$.
Also notice that for each cube $Q\in \mathscr N_{\rm cov}( 2^{-k},16B\cap E)$, the cube $2^{13} nQ$ can only contain  a  uniformly bounded number of
$S_{k,i}\in \mathscr S_k(16B)$. We conclude that for $k\ge  -N_0$,
$$\sharp \mathscr S_a(16B )\ls   N_{\rm cov}(2^{-k}, E\cap 32B).$$
This together with \eqref{e4.3} gives that
for $k\ge k_1+k_0+N_1$,
$$\sharp \mathscr S_k (16B)\ls  2^{(k-k_0)(\overline\dim_LE+\ez)}.$$
On the other hand, if $k_0-N_0\le k\le k_1+k_0+N_1$, then by $2^{k-k_0}\le 2^{k_1+N_1+N_0}\ls 1$ we always have
$$\sharp \mathscr S_k (16B)\ls 2^{n(k-k_0)}\ls 2^{(k-k_0)(\overline\dim_LE+\ez)}.$$
This gives the above claim.

By Proposition \ref{p3.1}, we have
\begin{eqnarray*}
 &&\Psi_{\az,\,2}(u\circ f,\,B(x_0,\, r))\\
&&\quad\ls \Phi_{\az}(u\circ f,\,B(x_0,\,16r))\\
&&\quad \ls r^{2\az-n} \sum_{k\ge k_0-N_0} \sum_{i=1}^{\sharp \mathscr S_k(16B )}  \int_{S_{k,i}} \int_{B(x_0,\,16r)} \frac {|u\circ f(x)-u\circ f(y)|^2} {|x-y|^{n+2\az}}\,dx\, dy \\
&&\quad\ls r^{2\az-n} \sum_{k\ge k_0-N_0} \sum_{i=1}^{\sharp \mathscr S_k(16B )}    \int_{S_{k,i}} \int_{ 2S_{k,i}} \frac {|u\circ f(x)-u\circ f(y)|^2} {|x-y|^{n+2\az}}\, dx\, dy\\
&&\quad\quad +r^{2\az-n} \sum_{k\ge k_0-N_0} \sum_{i=1}^{\sharp \mathscr S_k(16B )}   \int_{S_{k,i}} \int_{B(x_0,\,16r)\setminus 2S_{k,i}} \frac {|u\circ f(x)-u\circ f(y)|^2} {|x-y|^{n+2\az}}\, dx\, dy\\
&&\quad=P_1 + P_2.
\end{eqnarray*}

For each $S_{k,i}$, let  $B_{k,i}$ be the ball centered at $x_{k,i}$ ($x_{k,\,i}$ is the center of $S_{k,i}$) and radius $2\sqrt n \ell(S_{k,i})$.
Then
$$
2S_{k,i}\subset B_{k,i}\ \ \&\ \ \dist(x_{k,i},E)\ge 4\cdot  16\cdot2\sqrt n\ell (S_{k,i}).
$$
So applying the above {\bf Case 1} to $16B_{k,i}$, we have
$$\Phi_{\az}(u\circ f,\,B_{k,i})\ls \Psi_{\az,\,2}(u\circ f,\,16B_{k,i})\ls \|u\|^2_{Q_\az(\rn)}.$$
This, together with $n-2\az-\overline\dim_LE-\ez>0$, gives
\begin{eqnarray*}
P_1&&\ls  r^{2\az-n} \sum_{k\ge k_0-N_0} \sum_{i=1}^{\sharp \mathscr S_a(16B )}   |B_{k,i}|^{1-2\az/n} \Phi_{\az}(u\circ f,\,B_{k,i})\\
&&\ls  r^{2\az-n} \sum_{k\ge k_0-N_0} \sum_{i=1}^{\sharp \mathscr S_a(16B )}  2^{-(n-2\az)k} \|u\|^2_{Q_\az(\rn)}\\
&&\ls   \sum_{k\ge k_0-N_0}  2^{(k-k_0)(\overline\dim_LE+\ez)} 2^{(n-2\az)(k_0-k)} \|u\|^2_{Q_\az(\rn)}\\
&&\ls    \|u\|^2_{Q_\az(\rn)}.
\end{eqnarray*}

To estimate $P_2$,   write
\begin{eqnarray*}
 &&\int_{S_{k,i}} \int_{B(x_0,\,16r)\setminus 2S_{k,i}} \frac {|u\circ f(x)-u\circ f(y)|^2} {|x-y|^{n+2\az}}\, dx\, dy\\
&&\quad\ls\sum_{\ell= 1}^{k-k_0+5} 2^{(\ell-k)(-n-2\az)}\int_{S_{k,i}}\int_{2^{\ell+1}S_{k,i}\setminus 2^\ell S_{k,i}} |u\circ f(x)-u\circ f(y)|^2 \, dx\, dy\\
&&\quad\ls \sum_{\ell= 1}^{k-k_0+5} 2^{-2\az(\ell-k)}2^{-kn} \lf\{  \bint_{S_{k,i}} |u\circ f(x)-(u\circ f)_{2^{\ell+1}S_{k,i}}|^2\,dx \r. \\
&&\quad\quad\lf.+  \bint_{2^{\ell+1} S_{k,i} } |u\circ f(y)-(u\circ f)_{2^{\ell+1}S_{k,i}}|^2  \,  dy\r\}.
  \end{eqnarray*}
Observing that
\begin{eqnarray*}
&& \lf\{\bint_{S_{k,i}} |u\circ f(x)-(u\circ f)_{2^{\ell+1}S_{k,i}}|^2\,dx\r\}^{1/2}\\
&&\quad\ls \sum_{j=1}^{\ell+1}
 \lf\{\bint_{2^jS_{k,i}} |u\circ f(x)-(u\circ f)_{2^{j}S_{k,i}}|^2\,dx\r\}^{1/2}\\
&&\quad\ls (\ell+1)
\|u\circ f\|_{BMO(\rn)},
\end{eqnarray*}
we obtain
\begin{eqnarray*}
 &&\int_{S_{k,i}} \int_{B(x_0,\,16r)\setminus 2S_{k,i}} \frac {|u\circ f(x)-u\circ f(y)|^2} {|x-y|^{n+2\az}}\, dx\, dy\\
&&\quad\ls \sum_{\ell= 1}^{k-k_0+5} 2^{-2\az(\ell-k)} 2^{-kn} (\ell+1)^2 \|u\circ f\|^2_{BMO(\rn)}\\
&&\quad\ls 2^{(2\az-n)k} \|u\circ f\|^2_{BMO(\rn)}.
  \end{eqnarray*}
Therefore, by $n-2\az -\overline\dim_LE-\ez>0$, one gets
\begin{eqnarray*}
P_2&&\ls r^{2\az-n} \sum_{k\ge k_0-N_0} \sum_{i=1}^{\sharp \mathscr S_a(16B )} 2^{(2\az-n)k  )} \|u\circ f\|^2_{BMO(\rn)}\\
&& \ls \sum_{k\ge k_0-N_0}  2^{(k-k_0)(\overline\dim_LE+\ez)} 2^{(n-2\az)(k_0-k)} \|u\circ f\|^2_{BMO(\rn)}\\
&& \ls \|u\circ f\|^2_{BMO(\rn)}.
\end{eqnarray*}
Recall that it was proved by Reimann \cite{r74} that
 $\|u\circ f\|_{BMO(\rn)}\ls \|u\|_{BMO(\rn)} $,
and also in \cite{ejpx} that
 $\|u\|_{BMO(\rn)}\ls \|u\| _{Q_\az(\rn)}.$
Thus $ P_2 \ls \|u \|^2_{Q_\az(\rn)}.$

Combining the estimates for $P_1$ and $P_2$, we arrive at
$\Psi_{\az,\,2}(u\circ f,\,B(x_0,\, r))\ls \|u \|^2_{Q_\az(\rn)}$
for all $x_0$ and $r$ as desired.

{\bf Case 3}:\ $d(x_0,\,E)\le 2r$ and $r>1$. Without loss of generality, we may assume that $x_0=0$.
Denote by $M$ the minimum number of balls, which are centered in $B(0,\,1)\setminus B(0,\,1/2)$ and have radius $2^{-9}$,
required to cover $B(0,\,1)\setminus B(0,\,1/2)$. Let $\{B_j\}_{j=1}^M$ be a sequence of such balls and write their  centers as $\{x_j\}_{j=1}^M$.
Write
$$
B_{k,\,j}=  B( 2^{k }x_{j},\,2^{k-9} )\ \ \hbox{for}\ \ k\ge 2\ \ \hbox{and}\ \ j=1,\,\cdots,\,M.
$$
Notice that
\begin{equation}\label{e4.xx2}2^{k-9}= 2^{k-2} 2^{-7} \le 2^{-7} d(2^{k }x_{j},\,E) .
\end{equation}
Assume that $2^{ k_0-1 }\le r<2^{ k_0}$.
Then $k_0\ge 1$, and  $B(x_0,\,16r)\setminus B(x_0,\,2)$ can be covered by the
family   $\{B_{k,\,j}:\ 2\le k\le k_0+4,\,0\le j\le M\}$. Write $B_{1,\,j}=B(0,\,2)$.
Then we have
\begin{eqnarray*}
&&\Psi_{\az,\,2}(u\circ f,\,B(x_0,\, r))\\
&&\quad\ls \Phi_{\az}(u\circ f,\,B(x_0,\,16r))\\
&&\quad \ls \sum_{k=1}^{k_0+4}\sum_{j=1}^M r^{2\az-n} \int_{B_{k,\,j}}\int_{16B }\frac{|u\circ f(x)-u\circ f(y)|^2}{|x-y|^{n+2\az}}\,dx\,dy\\
&&\quad \ls\sum_{k=1}^{k_0+4} r^{2\az-n} 2^{k(n-2\az)} \Phi_{\az}(u\circ f,\,2B_{k,\,j})\\
&&\quad\quad+
 \sum_{k=1}^{k_0+4} r^{2\az-n} \int_{B_{k,\,j}}\int_{16B\setminus 2B_{k,\,j}}\frac{|u\circ f(x)-u\circ f(y)|^2}{|x-y|^{n+2\az}}\,dx\,dy\\
&&\quad= P_3+P_4.
\end{eqnarray*}
By Proposition \ref{p3.1} and  the result of {\bf Case 1} applied to $32  B_{k,j}$, we have $$\Phi_{\az}(u\circ f,\,2B_{k,\,j})\ls\Psi_{\az,\,2}(u\circ f,\,32B_{k,\,j})\ls \|u\|^2_{Q_\az(\rn)}$$ where
$$32\cdot 2^{k-9}=32\cdot 2^{-7}d(2^{k }x_{j},\,E) \le d(2^{k }x_{j},\,E)/4$$ due to \eqref{e4.xx2},
and hence
$$P_3\ls \|u\|^2_{Q_\az(\rn)} \sum_{k=1}^{k_0+4} r^{2\az-n} 2^{k(n-2\az)}\ls \|u\|^2_{Q_\az(\rn)}.$$
For $P_4$, an argument similar to $P_2$ in the {\bf Case 2} leads to $P_4\ls \|u\|^2 _{Q_\az(\rn)}$.
This finishes the proof of Theorem \ref{t1.2}.

\section {Proofs of Corollaries \ref{c1.3} and \ref{c1.x3}} \label{s4c}

\begin{proof}[Proof of Corollary \ref{c1.3}]
Notice that if $\bz>0$, then $f$ is a quasiconformal mapping from $\rn\to\rn$,
and that
$$
\begin{cases}
J_f\in A_1(\rn,\{0\})\ \ \hbox{when}\ \ \bz>1;\\
J_f\in A_1(\rn)\ \ \hbox{when}\ \ 0<\bz<1.
\end{cases}
$$
By Theorem \ref{t1.2}, if $\bz>0$, then ${\bf C}_f$ is bounded on $Q_\az(\rn)$ for all $\az\in(0,\,1)$.
If $\bz<0$, then $f$ is not a quasiconformal mapping from $\rn\to\rn$; so we can not apply Theorem \ref{t1.2} directly.
However, observe that $f$ is a quasiconformal mapping from $\rn\setminus\{0\}$ to $\rn$ with $J_f(x)\sim |x|^{\bz-1}$ yielding $J_f\in A_1(\rn, \{0\})$.
Thus, an argument similar to but easier than that for Theorem \ref{t1.2}  will lead to the boundedness of ${\bf C}_f$ on $Q_\az(\rn)$ for all $\az\in(0,\,1)$.

Indeed, let $u\in Q_\az(\rn)$ and $B=B(x_0,\,r)$ be an arbitrary ball of $\rn$. If $r<|x_0|/4$, then
$$
J_f(x)\sim |x_0|^{\bz-1}\quad\forall\quad x\in B(x_0,\,3r).
$$
With the help of this and $J_f\in A_1(\rn,\{0\})$, similarly to {\bf Case 1}  in the proof of Theorem \ref{t1.2}, we obtain
\eqref{e4.xx1}, \eqref{e4.1} and \eqref{e4.2}. This implies
$$
\Psi_{\az,\,2}(u\circ f,\,B(x_0,\,r))\ls\|u\|_{Q_\az(\rn)}^2.
$$

If $r\ge |x_0|/4$,  then by $B(x_0,\,r)\subset B(0,\,2r)$ and Proposition \ref{p3.1}, we have
  $$\Psi_{\az,\,2}(u\circ f,\,B(x_0,\,r))\ls\Psi_{\az,\,2}(u\circ f,\,B(0,\,2r))\ls \Phi_\az(u\circ f,\,B(0,\,32r)).$$
  Similarly to {\bf Case 3}  in the proof of Theorem \ref{t1.2},
denote by $M$ the minimum number of balls (centered at $B(0,\,1)\setminus B(0,\,1/2)$ and having radii $2^{-9}$),
that are required to cover $B(0,\,1)\setminus B(0,\,1/2)$. Let $\{B_j\}_{j=1}^M$ be   a collection of such balls and write their centers as $\{x_j\}_{j=1}^M$.
Write
$$
B_{k,\,j}=  B( 2^{-k }2^5rx_{j},\,2^{-k-9}2^5r)\ \ \hbox{for}\ \ k\ge 0\ \  \hbox{and}\ \ j=1,\,\cdots,\,M.
$$
Then $B( 0,\,32r)\setminus \{0\}$ is covered by the
family of balls $\{B_{k,\,j}:\ k\ge0,\,0\le j\le M\}$.
Therefore, we obtain
\begin{eqnarray*}
&&\Psi_{\az,\,2}(u\circ f,\,B(x_0,\, r))\\
&&\quad \ls \sum_{k\ge0} \sum_{j=1}^M r^{2\az-n} \int_{B_{k,\,j}}\int_{ B(0,\,32r)}\frac{|u\circ f(x)-u\circ f(y)|^2}{|x-y|^{n+2\az}}\,dx\,dy\\
&&\quad \ls\sum_{k\ge0}  r^{2\az-n} (2^ {-k}r)^{ n-2\az } \Phi_{\az}(u\circ f,\,2B_{k,\,j})\\
&&\quad\quad+
 \sum_{k\ge0} r^{2\az-n} \int_{B_{k,\,j}}\int_{B(0,\,32r)\setminus 2B_{k,\,j}}\frac{|u\circ f(x)-u\circ f(y)|^2}{|x-y|^{n+2\az}}\,dx\,dy\\
&&\quad= P_5+P_6.
\end{eqnarray*}
Similarly to the estimate on $P_3$, we have
$
P_5\ls \|u\|_{Q_\az(\rn)}^2;
$
and similarly to but easier than for $P_2$, we obtain
$
P_6\ls \|u\|_{Q_\az(\rn)}^2.
$
Putting all together gives
$$
\Psi_{\az,\,2}(u\circ f,\,B(x_0,\,r))\ls\|u\|_{Q_\az(\rn)}^2,
$$
as desired, and hence finishes the proof of Corollary \ref{c1.3}.
\end{proof}

\begin{proof}[Proof of Corollary \ref{c1.x3}] For our convenience, let $\rr^n_+=\{z=(x,\,y):\ x\in\rr^{n-1}\ \& \ y>0 \}$.
We also write $ \hh^n=\rr^n_+\setminus\rr^{n-1}$ and equip it with the hyperbolic distance $d_{\hh^{n}}$ - that is -
$$d_{\hh^n}(w,\,w')=\inf_\gz\int_\gz\frac{|dz|}{y}\quad\forall\quad w,\, w'\in \hh^n,
$$
where the infimum is taken over all rectifiable curves $\gz$ in $\hh^n$ joining $w$ and $w'$.

Suppose that $g:\ \rr^{n-1}\to\rr^{n-1}$ is a quasiconformal mapping when $n\ge 3$, or a quasisymmetric mapping when $n=2$.
According to Tukia-V\"ais\"al\"a \cite[Theorem 3.11]{tv}, $g$ can be extended to  such a quasiconformal mapping $f:\rr^n_+\to\rr^n_+$ that

(i) $f|_{\rr^{n-1}}=g$;

(ii)  $f|_{\hh^n}$ is an $L$-biLipschitz with respect to $d_{\hh^n}$ for some constant $L\ge 1$, i.e.,

$$
\frac 1L d_{\hh^n}(z,\,w)\le d_{\hh^n}(f(z),\,f(w))\le L d_{\hh^n}(z,\,w)\quad\forall\quad w,\,w'\in\hh^n.
$$

\noindent Obviously, such an $f$ can be further extended to a quasiconformal mapping  $\wz f: \ \rn\to\rn$ by reflection, that is,
 $$
 \wz f(z)=
 \begin{cases}
 f(z_1,\,\cdots,\,z_{n-1},\,-z_n)\quad\hbox{for}\quad z\in\rn\setminus\rr^n_+;\\
 f(z)\quad\hbox{for}\quad z\in\rr^n_+.
 \end{cases}
 $$
For sake of simplicity, we   write $\wz f$ as $f$, and generally set
 $$
 \begin{cases}
 n\ge 3;\\
 2\le p<n;\\
 \hh^{n,\,p}=\rr^n\setminus \rr^p=\{z=(x ,y):\ x \in\rr^{n-p}\ \& \ 0\ne y\in\rr^p \}.
 \end{cases}
 $$
We equip $ \hh^{n,\,p}$ with the distance $d_{\hh^{n,\,p}}$, an analog of the hyperbolic distance,  via
$$
d_{\hh^{n,p}}(w,\,w')=\inf_\gz\int_\gz\frac{|dz|}{|(0,\,y)|  }\quad\forall\quad w,\,w'\in \hh^{n,\,p},
$$
where the infimum is taken over all rectifiable curves $\gz$ in $\hh^{n,\,p}$ joining $w$ and $w'$. Suppose that $g:\ \rr^{n-p}\to\rr^{n-p}$ is a quasiconformal mapping  when $n-p\ge 2$, or a quasisymmetric mapping when $n-p=1$. In accordance with
Tukia-V\"ais\"al\"a's \cite[Section 3.13]{tv}, $g$ can be extended to a quasiconformal mapping $f:\rr^n \to\rr^n $ such that

(i) $f|_{\rr^{n-p}}=g$;

(ii)  $f|_{\hh^{n,\,p}}$ is a $L$-biLipschitz with respect to $d_{\hh^{n,\,p}}$ for some constant $L\ge 1$.

Notice that both $f$ and $f^{-1}$ are biLipschitz  with respect to $d_{\hh^{n,\,p}}$.
We show that ${\bf C}_f$ is bounded; the case of ${\bf C}_{f^{-1}}$ is analogous.
By Theorem \ref{t1.2}, it suffices to verify $J_f \in A_1(\rn;\rr^{n-p})$.
In what follows, we only consider the case $p=1$; the argument can easily be modified   to handle the case $p\ge 2$.

First observe that
$$
J_f(z )\sim  \frac {[d(f (z ),\,\rr^{n-1})]^n} {|y| ^n} \quad a.e.\quad z =(x,\,y)\in\rn\setminus\rr^{n-1},
$$
where $d(f(z),\,\rr^{n-1})$ stands for the Euclidean distance from the point $f(z)$ to $\rr^{n-1}$. Indeed, upon taking $r>0$ small enough such that
$$
r<|y |/2\quad\&\quad L_{f }(z ,\,r)\le d(f (z ),\,\rr^{n-1})/2,
$$
we get
$$
d(w,\,\rr^{n-1})\sim d(z ,\,\rr^{n-1})\sim |y |\quad \&\quad d(f(w),\,\rr^{n-1})/2\sim d(f (z ),\,\rr^{n-1})/2\quad \forall\ w\in B(z ,\,r),
$$
which in turn implies
$$
d_{\hh^n}(z,\,w) \sim  \frac{|z-w|}{|y|} \quad \&\quad d_{\hh^n}(f(z),\,f(w)) \sim \frac{|f(z)-f(w)|}{d(f (z ),\,\rr^{n-1})} \quad \forall\quad w\in B(z ,\,r).
$$
Therefore
$$
J_f(z)\sim |Df(z)|^n\sim \frac{[d(f (z ),\,\rr^{n-1})]^n} {|y| ^n} \quad a.e.\quad z\in \rn,
$$
as desired.

Now let $B(x_0,\,r)$ be an arbitrary ball with radius $r\le |y_0|/2$ and $z_0=(x_0,\,y_0)$.
Obviously, we have
$$
|y|/2\le |y_0|\le 2|y|\quad\forall\quad z=(x,\,y)\in B(z_0,\,r).
$$
 Then, it is enough to prove that
 \begin{equation}\label{e4.xxx9}
 d(f (z_0 ),\,\rr^{n-1})\sim d(f (z  ),\,\rr^{n-1})\quad a.e.\quad z \in B(z_0,\,r).
 \end{equation}
Assuming this holds for the moment, we have
$$
J_f(z )\sim   \frac{[d(f (z_0 ),\,\rr^{n-1})]^n} {|y_0| ^n} \quad a.e.\quad z \in B(z_0,\,r)
$$
and further
$$
\bint_{B(x_0,\,r)}J_f(z)\,dz \sim   \frac {[d(f(z_0 ),\,\rr^{n-1})]^n}{|y_0| ^n}\sim      \einf _{z\in B(x_0,\,r)} J_f(z),
$$
that is, $J_f\in A_1(\rn;\rr^{n-1})$, as desired.

Towards \eqref{e4.xxx9}, note that $f$ is a quasisymmetric mapping. So, there exists a homeomorphism $\eta:[0,\,\fz)\to [0,\,\fz)$ such that
$$
\frac{|f(z)-f(w)|}{|f(z_0)-f(w)|}\ls  \eta\lf(\frac{|z-w|}{|z_0-w|}\r)\quad \forall\quad w\in\rn.
$$
Observe that
$$
\frac12|z_0-w|\le |z_0-w|-|z-z_0|\le |z-w|\le |z-z_0|+|z_0-w|\le 2|z_0-w| \quad \forall\quad w\in\rr^{n-1}.
$$
Thus, by taking such a point $w\in\rr^{n-1}$ that
$$
|f(z_0)-f(w)|=d(f(z_0),\,\rr^{n-1}),
$$
we have
$$
d(f(z) ,\,\rr^{n-1})\le |f(z)-f(w)| \le  \eta(2)|f(z_0)-f(w)|\ls d(f(z_0),\,\rr^{n-1}) .
$$
Upon changing the roles of $z$ and $z_0$, we also have
$$
d(f(z_0) ,\,\rr^{n-1})\ls d(f(z),\,\rr^{n-1}).
$$
Hence \eqref{e4.xxx9} holds. This completes the proof of Corollary \ref{c1.x3}.
\end{proof}

\section{Proofs of Theorems \ref{t1.4} and  \ref{t5.1}} \label {s5}

\begin{proof}[Proof of Theorem \ref{t1.4}]

Fix $\az_0\in(0,\,1)$.  Let $a=1-2^{-2\az_0/(n-2\az_0)}\in (0,\, 1 )$, and let the sets $E_a$    be as  \eqref{e2.y2}  in Section 2.
Then we have
$
n-2\az_0 =n/ \log[2/(1-a)]
$
and by Lemma \ref{lemma 2.4},
$
\overline\dim_L E_a = n-2\az_0.
$
The set  $E_a$    is exactly what we want in the statement of Theorem \ref{t1.4}.

Now we are going to construct
a quasiconformal (Lipschitz) mapping $f:\rn\to\rn$ such that
 $J_f\in A_1(\rn,\,E_a)$ and hence  $J_f\in A_1(\rn,\,\mathcal E_a)$  but ${\bf C}_f$ is unbounded on $Q_{\az}(\rn)$
 for any $\az \in(\az_0 ,1)$.

Recall that  $\{z_{m,j}\}$ are the centers of $\{Q_{m,j}\}$ and $\{Q_{m,j}\}$ are
the pre-cubes appearing in the Cantor construction $ E_a$, see Section 2.
Let $\bz\in( 0,\,\fz)$ and define the map $f$ by setting
$$
f(x)=  \lf(\frac12 a[(1-a)/2]^{m}\r)^{-\bz} |x-z_{m,\,j}|^\bz(x-z_{m,\,j})+z_{m,\,j}
$$
if
$$
|x-z_{m,\,j}|<\frac12 a[(1-a)/2]^{m}\ \ \hbox{for\ some}\ \ m\in\nn\ \ \hbox{and}\ \ j=1,\,\cdots,2^{mn},
$$
and $f(x)=x$ otherwise.  Indeed, we only perturb the identity mapping  on all balls
$$
B(z_{m,\,j},\, \frac12 a[(1-a)/2]^{m})\subset Q_{m,\,j}
$$
by making ``radial" stretchings
with respect to their centers, where $|Q_{m,j}|=a[(1-a)/2]^{mn}$.
Notice that
$$
J_f(x)\sim|Df(x)|^n\sim \lf(\frac12 a[(1-a)/2]^{m}\r)^{-n\bz } |x-z_{m,\,j}|^{n\bz}\ls1\ \ \hbox{when}\ \ |x-z_{m,\,j}|<\frac12 a[(1-a)/2]^{m},
$$
and
$$
J_f(x)=|Df(x)|^n=1\ \ \hbox{otherwise}.
$$
Thus $f$ is  a quasiconformal mapping.
  Moreover, it is easy to check that
  $$
  J_f\in A_1(\rn;\,E_a)\ \ \&\ \
  J_f\notin A_1(\rn).
  $$

Set
$$ \bz_0=  1+\frac{n-2\az}{ n}\log\lf(\frac{1-a}2\r).$$
Then
$$
\bz_0>0\ \ \hbox{since}\ \ n-2\az<n-2\az_0 =\frac n{ \log[2/(1-a)]}.
$$
Set also
$$
\begin{cases}
\ell= mn\bz/(n-2\az)\ \ \hbox{if}\ \ 0<\bz\le \bz_0;\\
\ell= mn\bz_0/(n-2\az)\ \ \hbox{if}\ \ \bz>\bz_0.
\end{cases}
$$

With each $z_{m,\,j}\in E$, we   associate   a ball $B_{m,\,j}$ such that
$$
B_{m,\,j}\subset \frac {17}{64} 2^{-\ell}a  Q_{m,j}\ \ \&\ \
r_{m,\,j}=\frac1{64} 2^{-\ell} a[ (1-a)/2]^{ m}
$$
and so that the center $ x_{m,\,j}$ of $B_{m,\,j}$ satisfies
$$
|x_ {m,\,j}-z_{m,\,j}|=\frac14 2^{-\ell}a[(1-a)/2]^{m}.
$$
For each $m$, set
$$
u_m=\sum_{j=1}^{2^{mn}}u_{m,j},
$$
where
$$
u_{ m,\,j }(x)=\chi_{B_{m,\,j}} d(x,\,\partial B_{m,j})\ \ \hbox{for\ all\ possible}\ \ j.
$$
Obviously, $u_{m,j}$ is a Lipschitz function.

We make two claims:
\begin{equation}\label{e5.1} \|u_m\|^2_{Q_\az(\rn)} \ls  2^{mn} 2^{-\ell(n+2-2\az)} [(1-a)/2]^{m(n+2-2\az)}
\end{equation}
  and
\begin{equation}\label{e5.2}
\|u_m\circ f\|^2_{Q_\az(\rn)}
 \gs 2^{mn} 2^{  - \ell (n-2\az) /(\bz+1)} 2^{- 2\ell } [ (1-a)/2]^{m(n-2\az+2) } .
\end{equation}

Assuming that both \eqref{e5.1} and \eqref{e5.2} hold for the moment, we arrive at
$$
\frac{\|u_m\circ f\|^2_{Q_{\az}(\rn)}} {\|u_m\|^2_{Q_{\az}(\rn)}}
\gs\frac{2^{mn} 2^{  - \ell (n-2\az) /(\bz+1)} 2^{- 2\ell } [ (1-a)/2]^{m(n-2\az+2)}}
{2^{mn} 2^{-\ell(n+2-2\az) } [ (1-a)/2]^{m(n+2-2\az)}}\gs  2^{   \ell (n-2\az) \bz/(\bz+1)} ,
$$
which tends to $\fz$ as $m\to\fz$ since $\bz>0$ and $\ell \sim m$. This gives Theorem \ref{t1.4} under \eqref{e5.1}-\eqref{e5.2}.

Finally, we  verify \eqref{e5.1}-\eqref{e5.2}.

{\bf Proof of \eqref{e5.1}.} Let $B=B(x_B,\,r_B)$ be an arbitrary ball.

If $r_B\le r_{m,j}$,
since
$$
|u_m(x)-u_m(y)|\le |x-y|\ \ \forall\ \ x,\,y\in\rn
$$
one has
\begin{eqnarray}\label{e5.3}
\Phi_\az(u_m,\,2B )
 && \ls r_B^{2\az-n} \int_{2B } \int_{2B } \frac {1} {|x-y|^{n-2(1-\az)}} dx\, dy\\
&& \ls r_B^{2\az-n} \int_{2B } \int_{B (y,\,2r_B)} \frac {1} {|x-y|^{n-2(1-\az)}} dx\, dy\nonumber\\
 && \ls r_B^{2}\ls r_{m,\,j}^{2}.\nonumber
\end{eqnarray}
In particular,
$
\Phi_\az(u_m,\,2B_{m,j} )\ls r_{m,\,j}^{2}.
$

If $r_B>r_{m,j}$, one writes
 \begin{eqnarray*}
\Phi_\az(u_m,\,2B)&&\le 2|B|^{2\az/n-1}\sum_{B_{m,\,j}\cap 2B\ne\emptyset}\int_{B_{m,j}}\int_{2B} \frac {|u_m(x)-u_m(y)|^2} {|x-y|^{n+2\az}} dx\, dy
 \\
&&\le |B|^{2\az/n-1}\sum_{B_{m,\,j}\cap 2B\ne\emptyset}|B_{m,j}|^{1-2\az/n} \Phi_\az(u_m,\,2B_{m,j}) \\
&&\quad+ |B|^{2\az/n-1}\sum_{B_{m,\,j}\cap 2B\ne\emptyset} \int_{B_{m,j}}\int_{2B\setminus 2B_{m,j}} \frac {|u_m(x)-u_m(y)|^2} {|x-y|^{n+2\az}} dx\, dy.
\end{eqnarray*}
Notice that
\begin{eqnarray*}\int_{B_{m,j}}\int_{2B\setminus 2B_{m,j}} \frac {|u_m(x)-u_m(y)|^2} {|x-y|^{n+2\az}} dx\, dy&&\ls
  r_{m,j}^2|B_{m,j}| \int_{2B\setminus 2B_{m,j}} \frac 1 {|y-z_{m,\,j}|^{n+2\az}}  dy\\
&&\ls   r_{m,j}^{2-2\az}|B_{m,j}|.
\end{eqnarray*}
So, by \eqref{e5.3} one has
\begin{equation}\label{e5.4}
\Phi_\az(u_m,\,2B)
 \ls |B|^{2\az/n-1}\sum_{B_{m,\,j}\cap 2B\ne\emptyset} r_{m,j}^{2-2\az+n}.
\end{equation}
Below we consider three subcases.

First, if $r_{m,j}< r_B\le \frac1{64}  a[ (1-a)/2]^{m }$,
there are a uniformly bounded number of balls $B_{m,\,j}$ such that
$B_{m,j}\cap 2B\ne\emptyset$ and hence
\begin{eqnarray*}
\Phi_\az(u_m,\,2B)
 &&\ls |B|^{2\az/n-1}   \{2^{- \ell } [ (1-a)/2]^{ m}\} ^{2-2\az+n}\ls 2^{-{2}\ell } [ (1-a)/2]^{{2}m}.
\end{eqnarray*}

Second, if $ \frac1{64} a[ (1-a)/2]^{m-k }
<r_B\le \frac1{64}   a[ (1-a)/2]^{m-k-1}$ for some $1\le k\le m$,
 there are at most $2^{kn}$, up to a constant multiplier, many $B_{m,j} $ such that
 $ B\cap B_{m,j} \ne\emptyset$, and hence
\begin{eqnarray*}\Phi_\az(u_m,\,2B)
 &&\ls [(1-a)/2]^{(m-k)(2\az-n)} 2^{kn}  2^{-\ell(n+2-2\az)} [(1-a)/2]^{m(n+2-2\az)}.
\end{eqnarray*}
Since $2^{n}[ (1-a)/2]^{(n-2\az) }>1$ due to $ n-(n-2\az)\log[2/(1-a)]>0$,
we obtain
$$
\Phi_\az(u_m,\,2B)
 \ls 2^{mn}  2^{-\ell(n+2-2\az)} [(1-a)/2]^{m(n+2-2\az)}.
$$

Third, if $r_B> \frac1{64}   a[ (1-a)/2] $, there are at most $2^{mn}$, up to a constant multiplier, many $B_{m,j} $ such that
 $ B\cap B_{m,j} \ne\emptyset$, and hence
$$
\Phi_\az(u_m,\,2B)\ls 2^{mn}  2^{-\ell(n+2-2\az)} [(1-a)/2]^{m(n+2-2\az)}.
$$
To sum up, one obtains

$$\|u_m\|_{Q_\az(\rn)} \ls \max\{2^{-2\ell } [ (1-a)/2]^{2m}, 2^{mn} 2^{-\ell(n+2-2\az)} [(1-a)/2]^{m(n+2-2\az)}\}.$$
So \eqref{e5.1} will follow from this if one can show
$$
2^{-2\ell } [ (1-a)/2]^{2m}\le2^{mn} 2^{-\ell(n+2-2\az) } [ (1-a)/2]^{m(n+2-2\az)}.
$$
Obviously, this is equivalent to
$$
2^{\ell(n-2\az) }\le 2^{mn}  [ (1-a)/2]^{m(n -2\az)},
$$
and hence to
$$
\ell(n-2\az)\le mn+m(n-2\az)\log[(1-a)/2].
$$
But this last estimate follows from our choice of $\ell$, namely,
$$
\ell =\frac{mn}{n-2\az} \min\{\bz,\,\bz_0\}\le   \frac{mn}{n-2\az} \bz_0  = \frac{mn} {n-2\az} + m\log\lf(\frac{1-a}2\r).
$$
Thus \eqref{e5.1} holds.

{\bf Proof of \eqref{e5.2}.} Indeed, we have
\begin{eqnarray*}
\|u_m\circ f\|^2_{Q_\az (\rn)}&&\ge\Phi_\az (u_m\circ f,\,f^{-1}(B(0,\,2)))
 \\
&&\gs \sum_{j=1}^{2^{mn}} \int_{f^{-1}(B_{m,\,j})} \int_{f^{-1}(B_{m,\,j})} \frac {|u_{m,j}\circ f(x)-u_{m,j}\circ f(y)|^2} {|x-y|^{n+2\az}}\, dx\, dy\\
&&\gs \sum_{j=1}^{2^{mn}}   |f^{-1}(B_{m,j})|^{1-2\az/n}  \Phi_\az(u_{m,j}\circ f,\,f^{-1}(B_{m,j})).
\end{eqnarray*}

It suffices to estimate
$$
|f^{-1}(B_{m,j})|\ \ \&\ \ \Phi_\az(u_{m,j}\circ f,\,f^{-1}(B_{m,j}))
$$
from below. We first notice that if
$|x-z_{m,\,j}|<\frac12 a[(1-a)/2]^{m}
$,
then
 $$f^{-1}(x)=\lf(\frac12 a[(1-a)/2]^{m}\r)^{\bz/(\bz+1)}|x-z_{m,\,j}|^{-\bz/(\bz+1)}(x-z_{m,\,j})+z_{m,\,j}$$
and hence
 $$J_{f^{-1}}(x)\sim\lf(\frac12 a[(1-a)/2]^{m}\r)^{n\bz/(\bz+1)}|x-z_{m,\,j}|^{-n\bz/(\bz+1)}.$$
For every $y\in B_{m,j}$, we have that if  $|x-z_{m,\,j}|=2r_{m,j}
$,
then
$$  r_{m,\,j}\le |z_{m,\,j}-y_{m,j}|-|y-y_{m,j}|\le
|y-z_{m,\,j}|\le |z_{m,\,j}-y_{m,j}|+|y-y_{m,j}|\le 3 r_{m,\,j}
$$
and hence
$$
J_{f^{-1}}(y)\sim
\lf(\frac12 a[(1-a)/2]^{m}\r)^{ n\bz/(\bz+1)} r_{m,j}^{-n\bz/(\bz+1)}\sim
 2^{\ell n\bz/(\bz+1)}  .
$$
Therefore,
\begin{equation*}\label{e5.5}|f^{-1}(B_{m,\,j})|\sim   2^{-\ell n /(\bz+1)} [(1-a)/2]^{mn}
\end{equation*}
and
\begin{equation*}\label{e5.6}
\bint_{B_{m,j}}J_f(y)\,dy\sim  2^{\ell n\bz/(\bz+1)} \ls \einf _{y\in B_{m,j}} J_f(y).
\end{equation*}

Moreover, by $J_{f^{-1}}\in A_1(\rn)$ and similarly to \eqref{e4.2}, we have
\begin{eqnarray*}\Phi_\az(u_{m,j}\circ f,\,f^{-1}(B_{m,j}))&&\gs\Psi_{\az,\,2}(u_{m,j}\circ f,\,f^{-1}(2^{-4}B_{m,j})) \\
&&
\gs \Psi_{\az,\,2}(u_{m,j},\,2^{-4-N_2} B_{m,j})\\
&& \gs
\Phi_\az(u_{m,j},\,2^{-8-N_2}B_{m,j}).
\end{eqnarray*}
Notice that for all
$$
x \in  2^{-12-N_2}B_{m,j}\ \ \&\ \ y\in 2^{-8-N_2}B_{m,j}\setminus 2^{-9-N_2}B_{m,j},
$$
we have
$$
|x-y|\sim r_{m,j}\ \ \&\ \
|u_{m,j}(x)-u_{m,j}(y)|\ge  2^{-9-N_2}r_{m,j}-2^{-12-N_2}r_{m,j}\ge  2^{-10-N_2} r_{m,j}.
$$
Hence,
\begin{eqnarray*}
 \Phi_\az(u_{m,j},\,2^{-8}B_{m,j})
 && \gs r_{m,\,j}^{2\az-n} \bint_{2^{-12-N_2}B_{m,\,j}} \int_{2^{-8-N_2}B_{m,\,j}\setminus 2^{-9-N_2}B_{m,\,j}} \frac {|u_{m,j}(x)-u_{m,j}(y)|^2} {|x-y|^{n+2\az}} dx\, dy\\
 && \gs r_{m,\,j}^{2\az-n}  r_{m,j}^{2n} r_{m,j}^{-n-2\az+2}\\
 && \gs r_{m,\,j}^{2}.
\end{eqnarray*}
Therefore
\begin{equation}\label{e5.x1}\Phi_\az(u_{m,j}\circ f,\,f^{-1}(B_{m,j}))\gs r_{m,\,j}^{2}.
\end{equation}
This together with \eqref{e5.3} implies that
\begin{eqnarray*}
\|u_m\circ f\|^2_{Q_\az (\rn)}
&&\gs 2^{mn} 2^{  - \ell (n-2\az) /(\bz+1)}  [ (1-a)/2]^{m(n-2\az)} 2^{- 2\ell }[ (1-a)/2]^{2m}\\
&&\sim 2^{mn} 2^{  - \ell (n-2\az) /(\bz+1)} 2^{- 2\ell } [ (1-a)/2]^{m(n-2\az+2)}
\end{eqnarray*}
as desired.
 \end{proof}

 \begin{proof} [Proof of Theorem \ref{t5.1}]
 Fix $\az_0\in(0,\,1)$.
Let $\theta=(n-2\az_0)/n\in(0, 1)$ and  $\wz E_{\az_0}=(2^{\nn_\theta})^n$ be as  \eqref{e2.y1}  in Section 2.
By Lemma \ref{lemma 2.2}, $\overline\dim_{LG}(2^{\nn_\theta})^n=n-2\az_0$ but $ \dim_{L}(2^{\nn_\theta})^n= 0.$

Now we need to construct
a quasiconformal (Lipschitz) mapping $f:\rn\to\rn$ such that
 $J_f\in A_1(\rn,\,(2^{\nn_\theta})^n)$ but ${\bf C}_f$ is unbounded on $Q_{\az}(\rn)$ for each $\az \in(\az_0 ,1)$.
  The idea is similar to the construction of Theorem \ref{t1.4}.
 We divide the argument into two cases.

{\bf Case 1}:\ $\az_0=1$. 
Let $\bz>0$ and define
$$
\begin{cases}
f(x)=|x-\vec k|^\bz (x-\vec k)+\vec k\ \ \hbox{if}\ \ x\in B(\vec k,\,1)\ \ \hbox{with}\ \ \vec k\in (3\nn)^n;\\
f(x)=x\ \ \  \hbox{if}\ \ x\notin\cup_{\vec k\in\nn^n}B(\vec k,\,1).
\end{cases}
$$
Then $f$ is a quasiconformal mapping and
$$
\begin{cases}
J_f(x)\sim |x-\vec k|^{n\bz}\ \ \hbox{if}\ \ x\in B(\vec k,\,1)\ \ \hbox{for\ some}\ \ \vec k\in (3\nn)^n;\\
J_f(x)=1\ \ \hbox{otherwise}.
\end{cases}
$$

Now we show that ${\bf C}_f$ is unbounded on $Q_\az(\rn)$ for each $\az\in(0,\,1)$. Indeed, for each $\vec k\in (3\nn)^n$, we take a ball $B_{\vec k}$ such that
$$
|x_{B_{\vec k}}-\vec k|=2^{-m}\ \ \&\ \ r_{B_{\vec k}}=2^{-m-5}.
$$
Set
$$
u_{\vec k}(x)=\chi_{B_{\vec k}} d(x,\,\partial B_{\vec k}).
$$
For each $m$, set
$$
u_m=\sum_{|\vec k|\le 2^\ell}u_{\vec k}\ \ \hbox{with}\ \ \ell =m(n-2\az)/2\az.
$$
Observe that if $x \in B_{\vec k}$, then
  $$f^{-1}(x)= |x- {\vec k}|^{-\bz/(\bz+1)}(x-x_{\vec k})+x_{\vec k}
  $$
and hence
 $$J_{f^{-1}}(x)\sim |x- {\vec k}|^{-n\bz/(\bz+1)}\sim 2^{m\bz/(\bz+1)}.$$
 Thus, one gets
 $
 |f^{-1}(B_{\vec k})|\sim 2^{ -[1-\bz/(\bz+1) ]mn}.
 $

By an argument similar to \eqref{e5.x1} for $\Phi_\az(u_{m,j}\circ f,\,f^{-1}(B_{m,j}))$, we have
$$
\Phi_\az(u_{\vec k}\circ f,\,f^{-1}(B_{\vec k}))\gs 2^{-2m}.
$$
This leads to
\begin{eqnarray*}
\|u_m\circ f\|^2_{Q_{\az}(\rn)}
&&\ge\Phi_\az(u_m\circ f,\,f^{-1}(B(0,\,2^{\ell+1})))\\
&&\gs 2^{\ell(2\az-n)}\sum_{|\vec k|\le 2^\ell}
|f^{-1}(B_{\vec k})|^{1-2\az/n}\Phi_\az(u_{\vec k}\circ f,\,f^{-1}(B_{\vec k}))\\
&&\gs 2^{\ell(2\az-n)}\sum_{|\vec k|\le 2^\ell}
2^{-2m} 2^{-[1-\bz/(\bz+1)] m(n-2\az) } \\
&&\gs  2^{2\az\ell}2^{-2m} 2^{-[1-\bz/(\bz+1)] m(n-2\az) }\\
&&\gs 2^{-2m}2^{ m(n-2\az)\bz/(\bz+1)  },
\end{eqnarray*}
where
$
\ell =m(n-2\az)/2\az.
$

On the other hand, we claim that
$
\|u_m\|^2_{Q_{\az}(\rn)} \ls 2^{-2m}.
$
The proof of this estimate is similar to that of  \eqref{e5.1} . Five situations are required to handle.

If $r_B\le 2^{-m-5}$, by an argument similar to \eqref{e5.3}, we have
 $\Phi_{ \az}(u_m,\, 2B ) \ls 2^{-2 m}.
 $

If $r_B> 2^{-m-5}$, similarly to \eqref{e5.4}, we also have
 $$
 \Phi_{ \az}(u_m,\, 2B ) \ls |B|^{2\az/n-1}\sum_{B_{\vec k}\cap 2B\ne\emptyset} 2^{- m(2-2\az+n)}.
 $$

If $2^{-m-5}<r_B\le1$, there is at most one $B_{\vec k} $ such that
 $ B\cap B_{\vec k} \ne\emptyset$
 and hence
 $\Phi_{ \az}(u_m,\, 2B ) \ls 2^{-2 m }.$

If $1\le r_B\le 2^\ell$, then
 there are at most $2^{n+2}r_B^n$ many $B_{\vec k} $ such that
 $ B\cap B_{\vec k} \ne\emptyset$, and hence
 $$\Phi_{ \az}(u_m,\, 2B ) \ls r_B^n  r_B^{2\az-n} 2^{- m(2-2\az+n)}\ls 2^{2\az \ell}2^{- m(2-2\az+n)}\ls 2^{-2m},$$
where $\ell =m(n-2\az)/2\az$.

If $ r_B> 2^\ell$, then
 there are at most $2^{n+2}2^{\ell n }$ many $B_{\vec k} $ such that
 $ B\cap B_{\vec k} \ne\emptyset$, and hence
 $$\Phi_{ \az}(u_m,\, 2B ) \ls    r_B^{2\az-n} 2^{\ell n}2^{- m(2-2\az+n)}\ls 2^{2\az \ell}2^{- m(2-2\az+n)}\ls 2^{-2m},$$
where $\ell =m(n-2\az)/2\az$.

Finally, we have
 $$\frac{\|u_m\circ f\|_{Q_{\az}(\rn)}}{\|u_m \|_{Q_{\az}(\rn)}}
\gs 2^{  m( n-2\az)\bz/(\bz+1)}
\to\fz$$ as $m\to\fz$  since $\bz>0$.

{\bf Case 2}:\ $\az_0\in(0,\,1)$.
Similarly to {\bf Case 1}: $\az_0=1$, we   can first construct
 quasiconformal mappings $f:\rn\to\rn$  with $J_f\in A_1(\rn,\,   (2^{\nn_\theta})^n)$,
and then construct the critical function $u_m$
similarly to {\bf Case 1}: $\az_0=1$, but the key parameter $\ell$ over there is now taken as
$
m(n-2\az)/(2\az-n+\theta n)
$
{where}
$$
2\az-n+\theta n>0\Leftrightarrow 2\az>n-\theta n =\az_0.
$$
Such a ${\bf C}_f$ is not bounded  on $Q_{\az}(\rn)$ for all $\az\in(\az_0,\, 1 )$ and hence satisfies our requirement; we omit the details.
\end{proof}

\medskip

\noindent {\bf Acknowledgements.} The third and forth authors would like to thank Professor Jacques Peyriere for kind discussions on the Minkowski type dimension.
Part of this research was done during the first and forth authors' visit  at IPAM, UCLA; both authors would like to thank for the support.


\end{document}